\documentclass[11pt,reqno]{amsart}

\usepackage[T1]{fontenc}
\usepackage[utf8]{inputenc}
\usepackage{amsmath,amssymb,amsthm}
\usepackage{mathrsfs}
\usepackage[margin=1.15in]{geometry}
\usepackage[colorlinks=true,linkcolor=blue,citecolor=blue,urlcolor=blue]{hyperref}
\usepackage{graphicx}
\usepackage{float}

\theoremstyle{plain}
\newtheorem{theorem}{Theorem}[section]
\newtheorem{proposition}[theorem]{Proposition}
\newtheorem{lemma}[theorem]{Lemma}

\theoremstyle{definition}
\newtheorem{definition}[theorem]{Definition}

\newtheorem{example}[theorem]{Example}
\newtheorem{question}[theorem]{Question}

\theoremstyle{remark}
\newtheorem{remark}[theorem]{Remark}

\newcommand{\BB}{\mathbb{B}}
\newcommand{\RR}{\mathbb{R}}

\newcommand{\ZZ}{\mathbb{Z}}
\newcommand{\NN}{\mathbb{N}}

\newcommand{\Fn}{F_n}

\newcommand{\Homeo}{\operatorname{Homeo}}
\newcommand{\Conf}{\operatorname{Conf}}
\newcommand{\Aut}{\operatorname{Aut}}

\newcommand{\Fix}{\operatorname{Fix}}
\newcommand{\id}{\mathrm{id}}
\newcommand{\LB}{\mathrm{LB}}
\newcommand{\PLB}{\mathrm{PLB}}
\newcommand{\PHomeo}{\mathrm{PHomeo}}
\newcommand{\MCG}{\mathrm{MCG}}
\newcommand{\PMCG}{\mathrm{PMCG}}

\newcommand{\R}{\mathrm{R}}

\newcommand{\MM}{\mathrm{M}}
\newcommand{\N}{\mathrm{N}}

\newcommand{\Int}{\mathrm{Int}}
\newcommand{\CM}{\mathcal{M}}

\newcommand{\lk}{\operatorname{lk}}
\newcommand{\MF}{\operatorname{MF}}

\theoremstyle{plain}
\newtheorem{theoremx}{Theorem}

\newtheorem{propositionx}[theoremx]{Proposition}

\begin{document}
	
\title{Mixed configuration spaces, fixed points and Loop braid groups}
\author{Stavroula Makri}
\address{Department of Mathematics, Vrije Universiteit Amsterdam, Faculty of Science, De Boelelaan 1111, 1081 HV Amsterdam, the Netherlands}
\email{s.makri@vu.nl}
	
	\begin{abstract}
We introduce the notion of mixed configuration spaces of the $3$-ball. That is, we consider both distinct circles and distinct points in the interior of the $3$-ball and we prove that the projection onto the configuration space of circles of the $3$-ball is a locally trivial fibration. 

 The key idea for defining the mixed configuration spaces is to study fixed points  of orientation-preserving homeomorphism of the $3$-ball that leave invariant a trivial link of $n$ components in the interior of the $3$-ball. In particular, we prove that two fixed points are Nielsen equivalent if and only if the associated loop braids are conjugate by an element of a distinguished free subgroup of rank $n$. This result stands as a $3$-dimensional counterpart of the $2$-dimensional result where fixed points of homeomorphisms of the punctured disc are characterized in terms of braid elements of the classical Artin braid group.
\\\\
		\noindent \textit{Keywords:} Configuration spaces, Loop braid groups, Nielsen fixed point theory, Fixed points
	\end{abstract}
	\maketitle

	\section{Introduction}\label{intro}

In this work we introduce the notion of a mixed configuration space of the $3$-ball, that is, we consider distinct circles and points in the interior of the $3$-ball, which will be our main tool in studying fixed points of some homeomorphisms of the $3$-ball. Let $f$ be an orientation-preserving homeomorphism of the $3$-ball such that $f(C)=C$ for $C$ a trivial link of $n$ components in the interior of the $3$-ball. One is interested in determining and characterizing the Nielsen fixed point classes of such $f$. That is, to group together fixed points of $f$ that behave in a similar way. In \cite{Guaschi} Guaschi worked in a $2$-dimensional setting, in particular on the $n$-punctured disk, and provided necessary and sufficient conditions, in terms of braid elements, for two fixed points to be Nielsen equivalent. Consequently, the problem of determining whether two fixed points are Nielsen equivalent or not, which can be seen as a geometric and algebraic problem, is deduced to a conjugacy problem in braid group theory. Thus, the importance of this result lies in the fact that one can now use all tools of braid group theory but also knot invariants in order to study this problem. So far there has not been any similar development in a $3$-dimensional counterpart, mainly because the classical braid group theory becomes trivial in higher dimensions. In order to achieve an analogue $3$-dimensional development we introduce the notion of mixed configuration spaces and the use of loop braid groups.

The loop braid groups $\LB_n$ appear
in the literature under many different names. That is,  as basis-conjugating automorphisms and permutation
conjugacy automorphisms of $F_n$, due to  McCool and Savushkina \cite{McCool, Savushkina}, loop braid groups, due to Baez--Crans--Wise \cite{baez2007exotic}, groups of untwisted rings, due to Brendle--Hatcher \cite{BrendleHatcher} and welded braids, due to Fenn-Rimányi-Rourke \cite{FRR}. The group $\LB_n$
is in fact a $3$-dimensional analogue of the Artin braid group $\mathrm{B_n}$, where instead of
considering points on a surface we consider circles in the $3$-ball. There are several
interpretations of $\LB_n$; as the fundamental group of a
certain configuration spaces, as automorphisms of the free group $F_n$ and in terms of mapping class groups, as we will see in Section \ref{pre}. In terms of mapping class groups, the elements of loop braid groups are isotopy classes of orientation-preserving homeomorphisms $f$ of the $3$-ball that leave invariant a trivial link of $n$-components in the interior. This is a setting that has attracted a lot of attention and has been studied extensively from several viewpoints for various reasons \cite{Vershinin2001, damiani2017journey, Bellingeri2020, representationLoop, DarnePalmerSoulie2025, Makri2022}. Our main approach for studying the fixed points of such homeomorphisms $f$ in this $3$-dimensional setting is to define a mixed configuration space, where we consider circles and points inside the $3$-ball with the points playing the role of fixed points of $f$. We will prove that the projection of such a mixed configuration space onto the configuration space that contains only the circles, by forgetting the points, is actually a locally trivial fibration, which will be the main tool for providing necessary and sufficient conditions, in terms of loop braids, for two fixed points of $f$ to belong to the same Nielsen classes. We note that this mixed configuration space and the corresponding locally trivial fibration is inspired by the well-known Fadell--Neuwirth locally trivial fibration, where they consider the space of configurations of $n$ and $m$ distinct points and a projection map that "forgets" the last $m$ points. It is important to mention here that that the Fadell--Neuwirth locally trivial fibration \cite{FadellNeuwirth} has been one of the major tools in studying braid groups, see \cite{GuaschiJuanPineda2015}.

Let us present now the key ideas of this work, which is the notion of mixed configuration spaces and the existence of a projection map, which we prove that it is a locally trivial fibration. Let $\R_n$ denote the space of configurations of $n$ Euclidean, unordered, disjoint, unlinked
circles in the $3$-ball
lying on planes parallel to a fixed one. The fundamental group of $\R_n$ corresponds to the loop braid group, as explained in Section \ref{pre}. For $n, k\in \NN$ we define the $n,k$ \textit{mixed configuration space} to be the configuration space that consists of distinct circles and distinct ordered points. That is, $$\MM_{n,k}:=\{(c, p_1,\dots, p_k)\ |\ c\in \R_n,\ p_i\neq p_j\ \text{for}\ 1\leq i\neq j\leq k\ \text{and}\ p_i \in \Int(\BB^3)\setminus c\}.$$
We denote by $\CM_{n,k}$ the fundamental group of $\MM_{n,k}$. We consider the projection $p:\MM_{n,k}\to \R_n$ that forgets the $k$ points. Let $\Conf_n(X)$ denote the space of configurations of $n$ points in the space $X$. The theorem below appears in Section \ref{mix} as Theorem \ref{fibration}

\begin{theoremx}
The projection $p:\MM_{n,k}\to \R_n$ is a locally trivial fibration, where the fiber is $F:=\Conf_k(M)$, with $M=\Int(\BB^3)\setminus C$.
\end{theoremx}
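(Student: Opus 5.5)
The plan is to imitate the Fadell--Neuwirth argument: build local trivializations from a continuous family of ambient homeomorphisms. Fix a base configuration $C\in\R_n$ with circles $C_1,\dots,C_n$. A circle in $\R_n$ is determined by its center $x_i\in\Int(\BB^3)$ and radius $r_i>0$, with the plane fixed to be parallel to a given plane, and unordered. Near $C$, choose an ordering of the circles and pick a neighbourhood $U$ of $C$ small enough that every $c\in U$ has its $i$-th circle $c_i$ uniquely matched to $C_i$ by proximity of center and radius. On $U$, the data $(x_i(c),r_i(c))$ are then continuous functions.

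For each $i$, choose pairwise disjoint closed round balls $B_i\subset\Int(\BB^3)$ with $C_i\subset\Int(B_i)$. Shrinking $U$, assume $c_i\subset\Int(B_i)$ for all $c\in U$. The key construction is a continuous map $\theta:U\to\Homeo(\BB^3)$ with the following properties:
\begin{itemize}
\item $\theta_c$ is supported in $\bigcup_i B_i$;
\item $\theta_c(C_i)=c_i$ for each $i$;
\item $\theta_C=\id$.
\end{itemize}
On each $B_i$, let $\theta_c$ be the composite of two maps, both fixing $\partial B_i$ pointwise and depending continuously on the parameters. The first is a radial rescaling centred at the center of $C_i$, sending a thin solid torus around $C_i$ to one of radius $r_i(c)$; concretely, use a map of the form $v\mapsto \lambda(|v|)v$ in cylindrical coordinates about the axis of $C_i$. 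The second is a translation by $x_i(c)-x_i(C)$, damped to the identity near $\partial B_i$ by a bump function. Such a "push map" is standard: for small displacements, $v\mapsto v+\phi(v)w$ with $\phi$ a bump function and $|w|\,\|\nabla\phi\|<1$ is a homeomorphism. Continuity of $\theta$ in $c$ is clear, since everything is given by explicit formulas in $(x_i,r_i)$.

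Now define
\[
\Phi:U\times\Conf_k(M)\to p^{-1}(U),\qquad \Phi(c,(q_1,\dots,q_k))=(c,\theta_c(q_1),\dots,\theta_c(q_k)).
\]
This is well defined. Since $\theta_c$ is a homeomorphism of $\Int(\BB^3)$ carrying $C$ onto $c$, it maps $M$ onto $\Int(\BB^3)\setminus c$ and preserves distinctness of points. The map $\Phi$ commutes with the projections and has inverse $(c,p_1,\dots,p_k)\mapsto(c,\theta_c^{-1}(p_1),\dots,\theta_c^{-1}(p_k))$.

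Continuity of $\Phi$ and $\Phi^{-1}$ follows from joint continuity of $(c,x)\mapsto\theta_c(x)$ and $(c,x)\mapsto\theta_c^{-1}(x)$. The first is clear from the explicit formulas. For the inverse, note that $\Homeo(\BB^3)$ with the compact-open topology is a topological group because $\BB^3$ is compact, so inversion is continuous. Since every point of $\R_n$ has such a neighbourhood $U$, $p$ is a locally trivial fibration. Moreover, $\R_n$ is path-connected, so all fibers are homeomorphic to $F=\Conf_k(M)$.

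The main obstacle is the construction of $\theta$. The hard part is getting maps that are honestly homeomorphisms and continuous in the parameters. One must also ensure that the radius change keeps each circle in the family of Euclidean circles parallel to the fixed plane, and that it maps $C_i$ exactly onto $c_i$, not just isotopically. I would handle this by writing everything in cylindrical coordinates about each axis, with explicit monotone radial reparametrizations. A second, smaller point is the unordered nature of $\R_n$. This is handled by working locally, where a continuous ordering exists.
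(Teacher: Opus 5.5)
Your overall architecture is the paper's: a continuous local family $\theta_c$ of ambient homeomorphisms with $\theta_c(C)=c$, the trivialization $(c,q)\mapsto(c,\theta_c(q_1),\dots,\theta_c(q_k))$, continuity via the compact-open topology and continuity of inversion, and connectedness of $\R_n$ to get a single fibre type. Your local-constancy argument for that last step is a fine substitute for the paper's isotopy-extension homeomorphism $h$.

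The gap is in how you localize $\theta_c$, which is exactly the step you flagged as hard. You require pairwise disjoint closed round balls $B_i\subset\Int(\BB^3)$ with $C_i\subset\Int(B_i)$, but these need not exist. The convex hull of a Euclidean circle is the flat disk it bounds, so any convex set containing $C_i$ contains every circle lying inside that disk. Two coplanar circles, one inside the disk bounded by the other, are disjoint, unlinked and lie in parallel planes. So they form a point of $\R_n$ (the usual motion realizing $\sigma_i$ passes through such a configuration), and at that point your construction cannot start.

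The radial rescaling is not localized near $C_i$ either. If $v=y-x_i$, the map $v\mapsto\lambda(|v|)v$ moves every point of the sphere of radius $r_i$ about $x_i$. It therefore also drags any other circle meeting that sphere, for instance a smaller coaxial circle stacked above $C_i$ and lying on it. If instead $|v|$ is the distance to the axis, the map is the identity only off a vertical cylindrical shell. That shell meets $\partial B_i$, so the map is not supported in $B_i$ unless you add a cutoff in the height variable. Since local triviality has to be checked at every point of the base, your proof does not yet cover such configurations.

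The repair is to localize in thin, pairwise disjoint closed tubular neighbourhoods $T_i$ (solid tori) of the $C_i$, which exist for any configuration of disjoint circles. Make both moves small perturbations of the identity supported there.
\begin{itemize}
\item Keep your damped translation, with its bump function equal to $1$ near $C_i$ and supported in $T_i$.
\item Replace the global rescaling by a damped dilation $y\mapsto y+\chi_i(y)(\mu_i-1)(y-x_i)$, where $\mu_i=r_i(c)/r_i$ and $\chi_i$ is supported in $T_i$ and equal to $1$ near $C_i$.
\end{itemize}
The dilation is a homeomorphism once $\mu_i$ is close to $1$, by the same Lipschitz argument as your push map: $y\mapsto y+g(y)$ with $g$ compactly supported and of Lipschitz constant less than $1$ is a homeomorphism. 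So it suffices to shrink $U$.

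Merging the two moves gives precisely the paper's $\Phi_c(y)=y+\sum_i\chi_i(y)(A_i^c(y)-y)$, where $A_i^c$ is the dilation followed by translation carrying $C_i$ onto $c_i$. The paper shows $\|D\Phi_c-I\|\le\lambda<1$ for $c$ near $c_0$. It then gets injectivity from the mean value inequality and surjectivity from an open--closed argument. The paper calls its supports $T_i$ balls, but nothing in its argument uses their shape, so thin solid tori work there too. With this replacement the rest of your proof goes through unchanged.
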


Making use of the fact that $p$ is a locally trivial fibration we can use the long exact sequence on homotopy groups, from which we deduce a splitting short exact sequence, as described in the result that follows. In addition, this splitting will be an important factor in giving a characterization of fixed points in terms of loop braid groups, but before presenting both results, we need to fix some notation. Let $C=C_1\sqcup\cdots\sqcup C_n$ be a trivial link of unknotted circles in the interior of the
$3$-ball $\BB^3$ and let $f$ be an orientation-preserving homeomorphism of $\BB^3$ fixing $\partial\BB^3$ pointwise and preserving $C$. Let $M=\Int(\BB^3)\setminus C$, then $\pi_1(M)$ is isomorphic to a free group of rank $n$. As it is described in Section \ref{pre}, following $C$ along an
isotopy from the identity to $f$ yields a loop braid $\beta$ in the loop braid group $\LB_n$, and also $f$ induces an automorphism
$\phi_\beta$ of $F_n$. To a fixed point $x$ of $f$ in $M$ we associate three objects. Its coordinate $R(x)\in F_n$, which is the Reidemeister coordinate of Nielsen theory, obtained
by considering a path from a base-point to $x$ and its image under $f$. Its mixed motion
$l_x$, which records the trajectory of the circles together with the point $x$ during the isotopy. That is,
an element of the fundamental group $\CM_{n,1}$ of the mixed configuration space of $n$ circles and
one marked point, introduced in Section \ref{mix}. We note that $\CM_{n,1}$ has a free subgroup $U$ of
rank $n$, consisting of the motions in which the circles stand still and only the marked point
moves. Finally, thickening the marked point to a small circle turns $l_x$ into a loop braid
$t(l_x)$ on $n+1$ circles, and $U$ into the corresponding subgroup $U_{n+1}$ of $\LB_{n,1}$.
The following results appear in Section \ref{mix} and Section \ref{main} as Proposition \ref{ses} and Theorem \ref{thm2} respectively, where all constructions are presented in detail. 

\begin{propositionx}
     For every $k\geq 1$ we have the following short exact sequence:

    $$1\longrightarrow F_n^k\overset{\iota_*}{\longrightarrow}\CM_{n,k}\overset{p_*}{\longrightarrow}\LB_n\longrightarrow1.$$
    The short exact sequence splits, under the section $s_*$, and thus we have $\CM_{n,k}\cong F_n^k\rtimes\LB_n$, where $\LB_n$ acts on $F_n^k$ in the following way: for $\beta\in\LB_n$, whose induced element in $\Aut(F_n)$ is denoted by $\phi_\beta$, and for $(u_1,\dots,u_k)\in F_n^k$ we have $$s_*(\beta)^{-1}(u_1,\dots,u_k)s_*(\beta)=(\phi_\beta(u_1), \dots, \phi_\beta(u_k)).$$

    In particular, for $k=1$ we have $$\CM_{n,1}\cong U\rtimes\LB_n,\ \text{where}\ U\cong F_n.$$
\end{propositionx}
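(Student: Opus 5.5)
The proof follows the Fadell--Neuwirth pattern: apply the long exact sequence of homotopy groups to the fibration of Theorem \ref{fibration}, then use an explicit section both to kill the boundary map and to split the sequence. Fix a base configuration $C\in\R_n$ and put $M=\Int(\BB^3)\setminus C$. By Theorem \ref{fibration}, $p:\MM_{n,k}\to\R_n$ is a locally trivial fibration with fiber $F=\Conf_k(M)$, which gives the exact sequence
$$\pi_2(\R_n)\overset{\partial}{\longrightarrow}\pi_1(\Conf_k(M))\overset{\iota_*}{\longrightarrow}\CM_{n,k}\overset{p_*}{\longrightarrow}\pi_1(\R_n)\longrightarrow\pi_0(\Conf_k(M)).$$
The last term is trivial because $M$ is a connected $3$-manifold, so $\Conf_k(M)$ is connected. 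Also $\pi_1(\R_n)\cong\LB_n$ by Section \ref{pre}.

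\textbf{The fiber.} I identify $\pi_1(\Conf_k(M))$ with $\pi_1(M)^k\cong F_n^k$. The ordered configuration space $\Conf_k(M)$ is $M^k$ minus the fat diagonal. Each diagonal stratum has codimension $3$ in the $3k$-dimensional manifold $M^k$, so a general position argument shows that removing it changes neither $\pi_0$ nor $\pi_1$. Since $M$ is the ball minus a trivial link of $n$ components, $\pi_1(M)\cong F_n$ (Section \ref{pre}), and hence $\pi_1(\Conf_k(M))\cong F_n^k$.

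\textbf{The section.} I will build a section $s$ of $p$, at least up to homotopy. First, radial scaling $x\mapsto tx$ for $t\in[\tfrac12,1]$ preserves roundness, disjointness, unlinkedness and parallelism of the circles. It therefore deformation retracts $\R_n$ onto the subspace $\R_n^{1/2}$ of configurations lying in the ball of radius $\tfrac12$. Next choose $k$ distinct points $q_1,\dots,q_k$ with $\tfrac12<|q_i|<1$, and define $s(c)=(c,q_1,\dots,q_k)$ on $\R_n^{1/2}$. Composing with the retraction gives a map $\R_n\to\MM_{n,k}$ with $p\circ s\simeq\id$. Then $p_*$ is surjective on every $\pi_j$, in particular on $\pi_2$, so exactness forces $\partial=0$ and $\iota_*$ is injective. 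Thus the sequence is short exact, and it splits via $s_*$, giving $\CM_{n,k}\cong F_n^k\rtimes\LB_n$.

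\textbf{The action.} This is the step needing the most care. Take $\beta\in\LB_n$ represented by a motion of the circles inside the ball of radius $\tfrac12$, so that $s_*(\beta)$ is the mixed motion in which the circles perform $\beta$ and the points stay at the $q_i$. Take $u=(u_1,\dots,u_k)\in F_n^k$ represented by loops of the points with the circles fixed at $C$. The conjugate $s_*(\beta)^{-1}us_*(\beta)$ lies in the fiber subgroup by normality. To compute it, I realise the motion $\beta$ by an ambient isotopy $f_t$ supported in the ball of radius $\tfrac12$, so that it fixes the $q_i$. Conjugating by this motion transports each loop $u_i$ by the end homeomorphism $f_1$, producing $f_{1*}(u_i)$. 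By the definition of the induced automorphism in Section \ref{pre}, this is $\phi_\beta(u_i)$. The main obstacle is the bookkeeping of conventions: the direction of conjugation, the left versus right action, and whether one gets $\phi_\beta$ or $\phi_\beta^{-1}$. I would settle these by checking the formula on the standard generators of $\LB_n$ (the permutations and the passing-through moves) against the Dahm automorphisms listed in Section \ref{pre}.

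\textbf{The case $k=1$.} Setting $k=1$ gives $\CM_{n,1}\cong U\rtimes\LB_n$, where $U=\iota_*(\pi_1(M))\cong F_n$ consists of the motions in which the circles stand still and only the point moves.
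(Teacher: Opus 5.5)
Your proof is correct, but it proves injectivity of $\iota_*$ by a different route from the paper.

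\textbf{How the paper gets injectivity.} The paper uses the section only for the splitting. For $k=1$ it composes $\iota_*$ with the thickening homomorphism $t\colon\CM_{n,1}\to\LB_{n,1}$. It identifies $t(\iota_*(u))$ with the automorphism $\alpha_u$ of $F_{n+1}$ (Lemma~\ref{correspondance}). It then uses that the centralizer of $x_{n+1}$ in $F_{n+1}$ is $\langle x_{n+1}\rangle$. For $k\ge 2$ it reduces to $k=1$ through the maps $q_i\colon\MM_{n,k}\to\MM_{n,1}$ that forget all but the $i$th point.

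\textbf{How yours compares.} Your argument is the standard Fadell--Neuwirth one: a (homotopy) section makes $p_*$ onto on $\pi_2$, so $\partial=0$. It has three advantages:
\begin{itemize}
\item it handles all $k$ at once;
\item it needs no information about $\pi_2(\R_n)$, which, as the paper notes, need not vanish;
\item it avoids both the Dahm--Goldsmith embedding and the geometric computation of $t(a_u)$.
\end{itemize}
The paper's detour pays off only in that it already shows $t|_U$ is injective with $t(U)=U_{n+1}$, which it needs anyway for Theorem~\ref{thm2}.

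Your shrinking step is also more careful than the paper's section. The paper defines $s(c)=(c,*_1,\dots,*_k)$ with the $*_i$ in the collar. This is undefined for configurations whose circles pass through some $*_i$, which can happen because circles of $c\in\R_n$ may enter the collar. Pushing all configurations into the ball of radius $\tfrac12$ and placing the $q_i$ outside it fixes this.

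\textbf{Two small corrections.}
\begin{enumerate}
\item Uniform scaling by $\tfrac12$ is homotopic to the identity, but it is not a retraction, so it is not a deformation retraction onto $\R_n^{1/2}$. To get $p_*s_*=\id$ at the base point $(C,q_1,\dots,q_k)$, with $C$ inside radius $\tfrac12$, you have two options. You can rescale each $c$ by $\min\{1,\,1/(2\max_{x\in c}|x|)\}$, which is a genuine strong deformation retraction onto the configurations in the closed ball of radius $\tfrac12$. Or you can compose with a change-of-basepoint isomorphism.
\item You do not need to check generators against Dahm's formulas to fix the direction of the action. Use the square $(s,t)\mapsto(f_t(C),q_1,\dots,f_t(l_i(s)),\dots,q_k)$. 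Its edges are $s_*(\beta)$ at $s=0$ and $s=1$, $\alpha^i_u$ at $t=0$, and $\alpha^i_{f_{1*}(u)}$ at $t=1$. This gives $\alpha^i_u\,s_*(\beta)=s_*(\beta)\,\alpha^i_{f_{1*}(u)}$. Since $\phi_\beta$ is by definition the automorphism induced by the end homeomorphism, this is exactly the paper's argument, and it yields $\phi_\beta$ rather than $\phi_\beta^{-1}$.
\end{enumerate}
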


\begin{theoremx}
Let $x,y\in\Fix(f)\cap M$. The following are equivalent:
\begin{enumerate}
\item[(i)] the fixed points $x$ and $y$ are Nielsen equivalent in M, 
\item[(ii)] the coordinates $R(x)$ and R(y) are $\phi_\beta$-conjugate in $F_n$,
\item[(iii)] the mixed motions $l_x$ and $l_y$ are conjugate in $\CM_{n,1}$ by an element of $U$,
\item[(iv)] the loop braids $t(l_x)$ and $t(l_y)$ are conjugate in $\LB_{n,1}$ by an element of $U_{n+1}$.
\end{enumerate}
\end{theoremx}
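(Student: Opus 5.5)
I will prove the cycle (i)$\Leftrightarrow$(ii)$\Leftrightarrow$(iii)$\Leftrightarrow$(iv), treating each equivalence separately.

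\emph{Step 1: (i)$\Leftrightarrow$(ii).} This is the standard Reidemeister-trace argument from Nielsen theory, applied to the restriction $f|_M$ with $\pi_1(M)\cong F_n$. Fix a base point $*\in M$ near $\partial\BB^3$; it is fixed by $f$ if $f$ fixes $\partial\BB^3$ pointwise, and otherwise I use a fixed connecting path. For a fixed point $x$, choose a path $\gamma_x$ from $*$ to $x$ and set $R(x)=[\gamma_x\cdot (f\circ\gamma_x)^{-1}]\in F_n$. Changing $\gamma_x$ to $w\gamma_x$ with $w\in F_n$ replaces $R(x)$ by $wR(x)\phi_\beta(w)^{-1}$, where $\phi_\beta=f_*$. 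By definition, $x$ and $y$ are Nielsen equivalent exactly when some path $\delta$ from $x$ to $y$ satisfies $\delta\simeq f\circ\delta$ rel endpoints. Taking $\gamma_y=\gamma_x\delta$ turns this condition into $R(x)=R(y)$ for a suitable choice of paths. Varying the choices gives the $\phi_\beta$-twisted conjugacy statement.

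\emph{Step 2: (ii)$\Leftrightarrow$(iii).} I will use Proposition~\ref{ses} with $k=1$, so $\CM_{n,1}\cong U\rtimes\LB_n$ with $s_*(\beta)^{-1}us_*(\beta)=\phi_\beta(u)$. The key lemma is that $l_x=R(x)^{\pm1}\,s_*(\beta)$ up to a fixed convention. To see this, take the isotopy $f_t$ from $\id$ to $f$. The mixed motion $l_x$ is $t\mapsto(f_t(C),f_t(x))$, which ends at $(C,x)$ because $f(x)=x$. Concatenate with $\gamma_x$ and its reverse. The section $s$ is realised by the motion of the circles together with the base point $*$, which stays near the boundary. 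The difference between the two motions is then the loop $\gamma_x\cdot f_t(x)$-track$\cdot(f\circ\gamma_x)^{-1}$ traversed in the fiber. Homotoping the track into the fiber via $(f_t(C),f_t(\gamma_x(s)))$, this loop is $R(x)$. Granting the lemma, conjugating by $u\in U$ gives
\[
u\,R(x)s_*(\beta)\,u^{-1}=u\,R(x)\,\phi_\beta(u^{-1})^{\pm}\, s_*(\beta),
\]
with the exponent fixed by the conventions. Hence $U$-conjugacy of $l_x$ and $l_y$ is the same as $\phi_\beta$-conjugacy of $R(x)$ and $R(y)$. This uses uniqueness of normal form in the semidirect product: the $\LB_n$-components agree automatically, both being $\beta$.

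\emph{Step 3: (iii)$\Leftrightarrow$(iv).} The map $t$ thickens the point into a small circle and induces a homomorphism $t:\CM_{n,1}\to\LB_{n+1}$. Its image lies in the subgroup $\LB_{n,1}$ that fixes the last circle setwise, and $t(U)=U_{n+1}$. Conjugacy in $\CM_{n,1}$ therefore maps to conjugacy, which gives (iii)$\Rightarrow$(iv). For the converse I need $t$ to be injective. I would establish this through the induced automorphisms of $F_{n+1}$: elements of $\LB_{n,1}$ act on the first $n$ generators as $\phi_\beta$ does, conjugated by words in the new generator. Alternatively, compose with the forgetful description of Proposition~\ref{ses}: the map $t$ respects the splittings, and on $U\cong F_n$ it is the inclusion of the free subgroup of loop-braid generators moving the new circle through the others. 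That subgroup is free of rank $n$ by the Dahm/McCool description. Given injectivity, a conjugator $v\in U_{n+1}$ pulls back to $u=t^{-1}(v)\in U$, and this $u$ conjugates $l_x$ to $l_y$.

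\emph{Main obstacles.} The step I expect to be hardest is the lemma identifying $l_x$ with $R(x)s_*(\beta)$. Getting the base point, orientation and $\pm$ conventions consistent with the action formula in Proposition~\ref{ses} is delicate. The second difficulty is the injectivity of $t$, which I would verify via the faithful action on $F_{n+1}$.
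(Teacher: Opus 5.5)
Your proposal is correct and follows essentially the paper's route. The steps match as follows: (i)$\Leftrightarrow$(ii) by the standard Reidemeister argument; (ii)$\Leftrightarrow$(iii) by identifying $l_x$ with the extension of $s_*(\beta)$ by the coordinate via the square $(h_t(C),h_t(\lambda_x(s)))$ and computing $U$-conjugation in $U\rtimes\LB_n$; and (iii)$\Leftrightarrow$(iv) via $t(U)=U_{n+1}$ together with injectivity of $t$, which follows because $q\circ t=p_*$ gives $\ker t\subseteq U$ and $u\mapsto\alpha_u$ is injective. The conventions you flagged resolve as in the paper: with $R(x)=(f\circ\lambda_x)\bar\lambda_x$ one gets $l_x=s_*(\beta)\alpha_{R(x)}$ and $\alpha_v\,s_*(\beta)\alpha_u\,\alpha_v^{-1}=s_*(\beta)\alpha_{\phi_\beta(v)uv^{-1}}$, whereas moving $s_*(\beta)$ to the right introduces $\phi_\beta^{-1}$ rather than $\phi_\beta^{\pm1}$.
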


Note that the equivalence between $\mathrm{(i)}$ and $\mathrm{(ii)}$ is a classical result in Nielsen theory, for which we give more information in Section \ref{pre}.
It is interesting to compare this result with the $2$-dimensional analogue given by Guaschi in \cite{Guaschi}. In his result he proves that two fixed points are Nielsen equivalent if and only if the corresponding braid elements are conjugate by an element in $U$, where $U$ is the kernel of the projection $\mathrm{pr}:\mathrm{B}_{n,1}\to\mathrm{B}_n$. By $\mathrm{B}_n$ we denote the classical
Artin braid group on $n$ strands, which is well-known that it is identified with the mapping class group of the disc relative to a
set $A$ of $n$ marked points, and $\mathrm{B}_{n,1}$ is the subgroup of $\mathrm{B}_{n+1}$ consisting
of braids whose underlying permutation fixes the last strand. Note that the map $\mathrm{pr}$ forgets
that last strand. However, the fundamental difference with our result is that in our setting two fixed points are Nielsen equivalent if and only if the corresponding loop braid elements are conjugate by an elements in $U_{n+1}$, where $U_{n+1}$ is a proper subgroup of the kernel of the projection $\mathrm{pr}: \mathrm{LB_{n,1}}\to \mathrm{LB_{n}}$ and it can not be the whole kernel. 

As a result, by introducing the notion of mixed configuration spaces, which is interested in its own right, we can completely characterize Nielsen classes of the homeomorphisms $f$ of $M$ using loop braids, translating the problem of Nielsen equivalence to a conjugacy problem in loop braid group theory. Thus, one can now use all tools of loop braid group theory to study this problem.

The structure of this paper is as follows: In Section \ref{pre} we provide the necessary background concerning Nielsen fixed point theory, as well as, loop braid group theory. In Section \ref{mix} we define the mixed configuration spaces, we prove that the map $p$ is a locally trivial fibration and we prove that the fundamental group of this mixed configuration space is isomorphic to a free group and a loop braid group. It is important to mention that this is the main ingredient for providing our characterization of fixed points of $f$ in terms of loop braid elements. In Section \ref{main} we give the characterization of Nielsen fixed point classes using loop braids. Finally, in Section \ref{application} we discuss outcomes of our constructions and results.

     \section{Prerequisites}\label{pre}
We start this section with an introduction in loop braid groups and we conclude it with a presentation in Nielsen fixed point theory.
    \subsection{Loop braid groups}
We will present three different but equivalent definitions of the loop braid groups $\LB_n$. For more insights on the equivalent definitions of $\LB_n$ we refer the reader to \cite{damiani2017journey}.

Let $M$ be a compact, connected, orientable manifold, possibly with boundary, and $N$ a submanifold contained in the interior of $M$. A self-homeomorphism of the pair $(M,N)$ is a
homeomorphism $f\colon M\to M$ that fixes $\partial M$ pointwise and fixes $N$. Note that $f$ induces
a permutation of the connected components of $N$. We denote by $\Homeo(M,N)$ the group of
self-homeomorphisms of $(M,N)$ preserving the orientations of both $M$ and $N$, with multiplication
given by composition, and by $\PHomeo(M,N)$ the subgroup of those sending each component of $N$ to
itself. Two elements of $\Homeo(M,N)$ are isotopic relative to $N$ if there is an isotopy between
them through elements of $\Homeo(M,N)$.

\begin{definition}
The \emph{mapping class group} of $M$ with respect to $N$, denoted $\MCG(M,N)$, is the group of
isotopy classes of elements of $\Homeo(M,N)$. The \emph{pure mapping class group} $\PMCG(M,N)$ is
the subgroup of classes sending each component of $N$ to itself.
\end{definition}

Fix $n\ge1$ and let $C=C_1\sqcup\cdots\sqcup C_n$ be a trivial link of $n$ disjoint, unknotted,
oriented circles lying in parallel planes in the interior of the $3$-ball $\BB^3$. The exact
position of $C$ is irrelevant and thus we assume that $C$ is contained in the $xy$-plane inside $\BB^3$. As already stated in the introduction, the loop braid groups are a three-dimensional analogue of the braid groups $\mathrm{B}_n$ defined by Artin in \cite{artin1947braids}. We know that $\mathrm{B}_n$ can be seen as $\MCG(\mathbb{D}^2,Q_n)$ for an $n$-point set $Q_n$, and similarly we have the following definition. 

\begin{definition}
The loop braid group on $n$ components is $\LB_n:=\MCG(\BB^3,C)$, and the \emph{pure loop
braid group} is $\PLB_n:=\PMCG(\BB^3,C)$.
\end{definition}

Thus, given $f\in\Homeo(\BB^3,C)$ and an isotopy $\{f_t\}_{t\in[0,1]}$ from
$\id$ to $f$, one obtains a loop braid by following $C$ under the isotopy. We denote this loop braid by $b_{f,C}\in\LB_n$. Note that such an isotopy exists since $\Homeo(\BB^3)$ is contractible by the Alexander trick. Moreover, every element of $\Homeo(\BB^3,C)$ is isotopic, through elements of $\Homeo(\BB^3,C)$, to one that is the identity on a collar of $\partial\BB^3$. This does not change neither the isotopy class nor $b_{f,C}$, and thus we can assume in the following sections that our homeomorphisms have this property.

The geometric model that we will also consider is the untwisted ring group of Brendle--Hatcher
\cite{BrendleHatcher}.

\begin{definition}
Let $\R_n$ be the space of configurations of $n$ disjoint, pairwise unlinked, unordered Euclidean
circles contained in $\Int(\BB^3)$, lying in planes parallel to a fixed plane $P_0$, and
let $\widetilde{\R}_n$ be the corresponding space of ordered configurations, so that
$\R_n=\widetilde{\R}_n/S_n$, where $S_n$ is the symmetric group of degree $n$. We set $\LB_n:=\pi_1(\R_n,C)$.
\end{definition}

We work throughout with Euclidean circles since this makes $\R_n$ a
finite-dimensional manifold and is what the proof of Theorem \ref{fibration} uses. However, this causes no issues because by \cite{BrendleHatcher} the space of configurations of Euclidean circles is homotopy
equivalent to the corresponding space of configurations of smoothly embedded circles. Thus,
$\LB_n$ and hence every statement that follows are not affected by considering topological circles instead of Euclidean ones.

\begin{theorem}[Brendle--Hatcher, \cite{BrendleHatcher}]\label{bre}
The group $\LB_n$ admits the presentation with generators
$\sigma_1,\dots,\sigma_{n-1},\rho_1,\dots,\rho_{n-1}$ and relations
\begin{enumerate}
\item $\sigma_i\sigma_{i+1}\sigma_i=\sigma_{i+1}\sigma_i\sigma_{i+1}$, for $i=1,\dots,n-2$,
\item $\sigma_i\sigma_j=\sigma_j\sigma_i$, for $|i-j|>1$,
\item $\rho_i\rho_{i+1}\rho_i=\rho_{i+1}\rho_i\rho_{i+1}$, for $i=1,\dots,n-2$,
\item $\rho_i\rho_j=\rho_j\rho_i$, for $|i-j|>1$,
\item $\rho_i^2=1$, for $i=1,\dots,n-1$,
\item $\sigma_i\rho_j=\rho_j\sigma_i$, for $|i-j|>1$,
\item $\sigma_i\rho_{i+1}\rho_i=\rho_{i+1}\rho_i\sigma_{i+1}$, for $i=1,\dots,n-2$,
\item $\rho_i\sigma_{i+1}\sigma_i=\sigma_{i+1}\sigma_i\rho_{i+1}$, for $i=1,\dots,n-2$.
\end{enumerate}
\end{theorem}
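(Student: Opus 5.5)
This is the Brendle--Hatcher theorem, and the plan is to derive the presentation from the action of $\LB_n=\pi_1(\R_n,C)$ on $\pi_1(M)\cong F_n$, where $M=\Int(\BB^3)\setminus C$.

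\textbf{Step 1: generators.} Fix free generators $x_1,\dots,x_n$ of $F_n$, with $x_i$ a meridian loop of $C_i$. Define three kinds of motions.
\begin{itemize}
\item $\rho_i$ swaps $C_i$ and $C_{i+1}$ without passing through them. It acts on $F_n$ by $x_i\mapsto x_{i+1}$, $x_{i+1}\mapsto x_i$.
\item $\sigma_i$ passes $C_i$ through $C_{i+1}$ and then swaps them. It acts by $x_i\mapsto x_{i+1}$, $x_{i+1}\mapsto x_{i+1}^{-1}x_ix_{i+1}$.
\item Flips, which rotate a circle by $\pi$ about a diameter. Here we restrict to circles in planes parallel to $P_0$ with fixed orientation data, so the flip ambiguity is excluded.
\end{itemize}
This gives a homomorphism $\Phi:\LB_n\to\Aut(F_n)$, landing in the permutation-conjugacy automorphisms: those sending each $x_i$ to a conjugate of some $x_{\pi(i)}$.

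\textbf{Step 2: $\Phi$ is an isomorphism onto that subgroup.} This is the main obstacle.

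Surjectivity is algebraic. The automorphisms $\alpha_{ij}:x_i\mapsto x_j^{-1}x_ix_j$, together with the permutations, generate the permutation-conjugacy group by McCool. Each $\alpha_{ij}$ is realized by a motion of $C_i$ passing through $C_j$.

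Injectivity is the hard part. I would first try to show that $\R_n$ (equivalently $\widetilde\R_n$) is aspherical with the right $\pi_1$. The approach is an inductive fibration, in the spirit of Fadell--Neuwirth: forget one circle to get
\[
\widetilde\R_n\to\widetilde\R_{n-1},
\]
or, following Brendle--Hatcher, deformation retract $\widetilde\R_n$ onto a space of "rings" via a flow.

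Failing that, I would argue geometrically: a motion acting trivially on $\pi_1(M)$ extends to a homeomorphism of the complement isotopic to the identity. This uses that $M$ is irreducible up to spheres, Laudenbach-type results for mapping classes of $\#_n S^1\times S^2$, and that the twists along spheres act trivially on the configuration.

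\textbf{Step 3: presentation.} Once $\LB_n$ is identified with the permutation-conjugacy group, take McCool's presentation of the basis-conjugating group $\langle\alpha_{ij}\rangle$:
\[
[\alpha_{ij},\alpha_{kl}]=1,\quad [\alpha_{ik},\alpha_{jk}]=1,\quad [\alpha_{ij}\alpha_{kj},\alpha_{ik}]=1,
\]
for distinct indices. Extend it by the symmetric group $S_n=\langle\rho_i\rangle$ acting by permuting indices; this is a semidirect product.

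Then perform Tietze transformations, replacing the $\alpha_{ij}$ by $\sigma_i$ and $\rho_i$. Use
\[
\alpha_{i,i+1}=\rho_i\sigma_i
\]
(up to convention), and express every other $\alpha_{ij}$ as a $\rho$-conjugate of these. Finally, check that the McCool relations together with the $S_n$ relations reduce exactly to relations (1)--(8):
\begin{itemize}
\item (1)--(2) are the braid relations for the $\sigma_i$;
\item (3)--(5) give the Coxeter presentation of $S_n$;
\item (6)--(8) are the mixed relations, encoding how the $\rho$'s permute the $\alpha$'s.
\end{itemize}
This check is routine but lengthy. One can shortcut it by citing the equivalence of this presentation with the welded braid presentation of Fenn--Rimányi--Rourke.
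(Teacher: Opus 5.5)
\textbf{There is a genuine gap: the injectivity half of Step 2 is never proved, and your primary strategy for it cannot work.}

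The paper does not prove this statement; it only quotes Brendle--Hatcher. Your Steps 1 and 3 follow the standard Dahm--Goldsmith / McCool / Fenn--Rim\'anyi--Rourke route and are fine. Your $\sigma_i$ differs from the paper's but is a consistent convention. The flip bullet should be dropped, since a flip is not a path in $\R_n$.

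You rightly call Step 2 the crux, but your main route there rests on a false claim: neither $\R_n$ nor $\widetilde{\R}_n$ is aspherical.
\begin{itemize}
\item $\rho_i\neq 1$, because it maps to a transposition under $\pi_1(\R_n)\to S_n$.
\item $\rho_i^2=1$ can be seen directly: move one circle into a different plane parallel to $P_0$, where circles can no longer meet or link, and the double exchange contracts.
\item A finite-dimensional manifold with torsion in $\pi_1$ is not a $K(\pi,1)$. The covering $\widetilde{\R}_n\to\R_n$ does not change higher homotopy groups, so $\widetilde{\R}_n$ is not aspherical either. The paper makes this same remark in the proof of Proposition \ref{ses}.
\end{itemize}
Nor is there a Fadell--Neuwirth fibration $\widetilde{\R}_n\to\widetilde{\R}_{n-1}$ to induct along. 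The trivializations in the proof of Theorem \ref{fibration} use bump-function homeomorphisms, which do not carry Euclidean circles to Euclidean circles, so they only handle point fibers. Brendle--Hatcher's flow shows that $\R_n$ is homotopy equivalent to the space of smooth trivial links, not that it is aspherical.

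Your fallback, Laudenbach-type control of homeomorphisms acting trivially on $\pi_1(M)$, is essentially the Dahm--Goldsmith argument. At best it shows that $\MCG(\BB^3,C)\to\Aut(F_n)$ is injective. Since you define $\LB_n=\pi_1(\R_n,C)$, you also need $\pi_1(\R_n)\to\MCG(\BB^3,C)$ to be injective. That is, a loop of Euclidean ring configurations whose terminal homeomorphism is isotopically trivial must be null-homotopic through Euclidean rings. This is Brendle--Hatcher's main theorem, and you would have to invoke it explicitly.

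A cleaner fix avoids topological injectivity altogether. Let $G$ be the group presented by (1)--(8); your Step 3 shows that $G\to\Aut(F_n)$ is injective with image the permutation-conjugacy automorphisms. It then suffices to show that the motions $\sigma_i,\rho_i$ satisfy (1)--(8) in $\pi_1(\R_n)$ and generate it. Then $G\to\pi_1(\R_n)$ is onto, and it is injective because its composite with $\Phi$ is.

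Generation follows from general position on the heights of the planes, measured along the normal to $P_0$. Configurations with pairwise distinct heights form a contractible subset of $\R_n$, since there the circles cannot interact. A generic loop crosses only codimension-one walls where two circles of adjacent heights become coplanar. At such a wall they are either nested, which gives a $\sigma_k^{\pm1}$-passage, or side by side, which gives a $\rho_k$-passage.
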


Relations $(1)$-$(2)$ are the braid relations, $(3)$-$(5)$ are the symmetric group relations, and $(6)$-$(8)$ are the mixed relations. The $\sigma_i$ generator permutes the $i^{th}$ and $(i+1)^{st}$ circles by passing the
$i^{th}$ through the $(i+1)^{st}$, and the $\rho_i$ generator permutes them by passing the $i^{th}$ around the
$(i+1)^{st}$, as in Figure \ref{figure}.

\begin{figure}[H]
	\centering
	\includegraphics[width=0.8\textwidth]{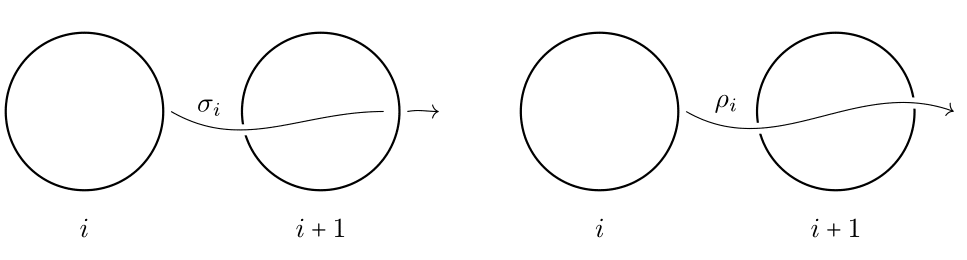}
	\caption{The generators $\sigma_i$ and $\rho_i$.}\label{figure}
\end{figure}

The loop braid group can be also considered in terms of automorphisms of the free group. 
Let $\Fn$ be the free group of rank $n$ with generators $x_1,\dots,x_n$. By
\cite{Goldsmith} the group $\LB_n$ embeds in $\Aut(\Fn)$, with image the group of permutation-conjugacy automorphisms. We note that the following result comes from Dahm’s unpublished thesis \cite{Dahm},
that appears in the last section of Goldsmith’s paper \cite{Goldsmith}. That is,
$$
a\colon x_i\longmapsto W_i^{-1}\,x_{\pi(i)}\,W_i,\qquad \pi\in S_n,\ W_i\in\Fn,
$$
and $\PLB_n$ corresponds to those with $\pi=\id$. Under this embedding the generators act as follows:

\begin{equation*}
\sigma_i:
\begin{cases}
x_i\mapsto x_i^{-1}x_{i+1}x_i\\
x_{i+1}\mapsto x_i\\
x_j\mapsto x_j,\quad j\neq i,i+1
\end{cases}
\qquad\text{and}\qquad
\rho_i:
\begin{cases}
x_i\mapsto x_{i+1}\\
x_{i+1}\mapsto x_i\\
x_j\mapsto x_j,\quad j\neq i,i+1.
\end{cases}
\end{equation*}

The $\sigma_i$ generate a copy of the braid group $\mathrm{B}_n$ and the $\rho_i$ a copy of the symmetric
group $S_n$. When we consider an element of $\LB_n$ as an automorphism we shall say so by writing
$\LB_n\subset\Aut(\Fn)$.

Our convention is that we use left conjugation, as in  \cite{BrendleHatcher,damiani2017journey}, instead of right conjugation, as in \cite{Makri, representationLoop}.

\subsection{Nielsen fixed point theory}
We will give an overview of Nielsen fixed point theory and for further insights in the theory we refer the reader to \cite{Jiang} and (\cite{JezierskiMarzantowicz2006}, Chapter IV).  
A classical problem related to Nielsen theory refers to determining the minimum number of fixed points among all maps homotopic to a given map $f$ from a compact space to itself.   
In particular, let $X$ be a connected finite cell complex, and $f:X\to X$ a continuous map. We denote by $\text{Fix}(f)$ the fixed point set of $f$, that is, $\text{Fix}(f)=\{x\in X\ | \ f(x)=x\}$. One is interested in studying the minimal number of the map $f$ denoted by $\MF(f)$ and defined as $\MF(f)=\text{min}\{\#Fix(g)\ | \ g\sim f\}$, where $\sim$ means homotopic. Two fixed points $x, y$ of $f$ are said to be \emph{Nielsen equivalent} if
there exists a path $\ell$ in $X$ from $x$ to $y$ such that $\ell$ and its image $f \circ \ell$ are homotopic relative to their endpoints. The equivalence class of a point $x \in \mathrm{Fix}(f)$ is called the \emph{Nielsen class} of $x$. We denote by $\mathrm{NC}(f)$ the set of Nielsen classes of $f$. A Nielsen class is \emph{essential} if it has nonzero fixed point index, as defined in (\cite{Jiang}, page 17) and the \emph{Nielsen number} $N(f)$ is defined as the number of essential Nielsen classes of $f$. The number $N(f)$ is a homotopy invariant which gives a lower bound for $\MF(f)$, that is $N(f)\leq \MF(f)$.

To compute $N(f)$ one needs to decide whether two fixed points belong to the same Nielsen fixed point class. This geometric problem was expressed in an algebraic context by Reidemeister in the following way. Choose a base-point $\ast_1 \in X$ and a path $w$ from $\ast_1$ to $f(\ast_1)$, called \emph{base path}. For simplicity, let $\pi = \pi_1(X, \ast_1)$ be the fundamental group of $X$ based at $\ast_1$. We define $f_\pi: \pi \to \pi$ as the composition of the induced map $f_*: \pi_1(X, \ast_1) \to \pi_1(X, f(\ast_1))$ with $w_*: \pi_1(X, f(\ast_1)) \to \pi_1(X, \ast_1)$. Two elements $\lambda_1, \lambda_2 \in \pi$ are said to be \emph{Reidemeister equivalent} or $f_\pi$-conjugate if there exists $\lambda \in \pi$ such that
\[
\lambda_2 = f_\pi(\lambda) \cdot \lambda_1 \cdot \lambda^{-1}.
\]
An equivalence class of elements of $\pi$ under this relation is called a \emph{Reidemeister class} and we denote by $ \mathcal{R}(f_\pi)$ the set of all Reidemeister classes. 

We are ready to assign a Reidemeister class to a fixed point of $f$. For $x \in \mathrm{Fix}(f)$, choose a path $\ell$ from $\ast_1$ to $x$. The \emph{Reidemeister class} or \emph{coordinate} of $x$ is the class $[w(f \circ \ell)\ell^{-1}]\in \pi$ and will be denoted by $R(x)$. Note that the Reidemeister class represented by $[w(f \circ \ell)\ell^{-1}]$
is independent of the choice of $\ell$, since for another path $\ell'$ from $\ast_1$ to $x$, the loops $w(f \circ \ell)\ell^{-1}$ and $w(f \circ \ell')(\ell')^{-1}$  have the same Reidemeister class, as we will see in the following Lemma. All points in a Nielsen class $K$ share the same Reidemeister class, which is denoted by $R(K)$ and called the Reidemeister class of $K$ or \emph{coordinate} of $K$.

From now on we assume that the base-point $\ast_1$ is a fixed point of $f$, which will be the case in Section \ref{mix}, where $f$ is the identity on the collar of $\partial\BB^3$. We can then take the base path $w$ to be constant, so that $f_\pi=f_*$ and the coordinate of a fixed point is simply $R(x)=(f\circ\ell)\ell^{-1}$. The following result puts together two important facts that we will use in Section \ref{main}.

\begin{lemma}\label{Nielsen}
Let $x,y\in\Fix(f)$ and $R(x)=(f\circ \ell)\ell^{-1}\in \pi$. Then:
\begin{enumerate}
\item[(i)] Let $\gamma\in\pi$. Replacing $\ell$ by $\gamma\ell$,
replaces $R(x)$ by $f_{\pi}(\gamma)R(x)\gamma^{-1}$. Thus, the class of $R(x)$ in $\mathcal{R}(f_\pi)$
is independent of the choice of $\ell$.
\item[(ii)] The fixed points $x$ and $y$ are Nielsen equivalent in $X$ if and only if $R(x)$ and $R(y)$ are Reidemeister equivalent.
\end{enumerate}
\end{lemma}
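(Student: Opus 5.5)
For part (i), I will compute directly with paths. Replacing $\ell$ by $\gamma\ell$, where $\gamma$ is a loop at $\ast_1$ traversed first, gives the new coordinate
\[
(f\circ(\gamma\ell))(\gamma\ell)^{-1}=(f\circ\gamma)(f\circ\ell)\ell^{-1}\gamma^{-1}.
\]
Since $f(\ast_1)=\ast_1$ and the base path $w$ is constant, $f\circ\gamma$ represents $f_\pi(\gamma)$. The new coordinate is therefore $f_\pi(\gamma)R(x)\gamma^{-1}$, which is Reidemeister equivalent to $R(x)$ by definition.

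Any other path $\ell'$ from $\ast_1$ to $x$ can be written, up to homotopy rel endpoints, as $\gamma\ell$ with $\gamma=\ell'\ell^{-1}\in\pi$. Homotopic paths give homotopic loops $(f\circ\ell)\ell^{-1}$, because $f$ preserves homotopies rel endpoints. So the class of $R(x)$ in $\mathcal{R}(f_\pi)$ does not depend on $\ell$.

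For part (ii), fix paths $\ell_x$ from $\ast_1$ to $x$ and $\ell_y$ from $\ast_1$ to $y$.

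\emph{Nielsen equivalent implies Reidemeister equivalent.} Suppose $c$ is a path from $x$ to $y$ with $f\circ c\simeq c$ rel endpoints. Take $\ell'=\ell_x c$ as a path from $\ast_1$ to $y$. Then
\[
(f\circ\ell')\ell'^{-1}=(f\circ\ell_x)(f\circ c)c^{-1}\ell_x^{-1}\simeq(f\circ\ell_x)\ell_x^{-1}=R(x).
\]
So some representative of the class of $y$ equals $R(x)$. By (i), $R(y)$ is Reidemeister equivalent to $R(x)$.

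\emph{Reidemeister equivalent implies Nielsen equivalent.} Suppose $R(y)=f_\pi(\gamma)R(x)\gamma^{-1}$. By (i), $R(y)$ is the coordinate of $x$ computed with the path $\gamma\ell_x$. Set $c=(\gamma\ell_x)^{-1}\ell_y$, a path from $x$ to $y$. From
\[
(f\circ(\gamma\ell_x))(\gamma\ell_x)^{-1}\simeq(f\circ\ell_y)\ell_y^{-1},
\]
rearrange to get
\[
(f\circ(\gamma\ell_x))^{-1}(f\circ\ell_y)\simeq(\gamma\ell_x)^{-1}\ell_y,
\]
that is, $f\circ c\simeq c$ rel endpoints. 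Hence $x$ and $y$ are Nielsen equivalent.

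The only delicate point is bookkeeping. The path-composition convention must match the paper's writing of $w(f\circ\ell)\ell^{-1}$ as a loop at $\ast_1$. One must also check that $f\circ\gamma$ really represents $f_\pi(\gamma)$, which holds because $\ast_1\in\Fix(f)$ and $w$ is constant.
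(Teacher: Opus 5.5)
Your proposal is correct and follows the paper's proof essentially step for step: the same path computation for (i), the same choice $\ell_x c$ for the forward direction of (ii), and the same Nielsen path $(\gamma\ell_x)^{-1}\ell_y$ for the converse. Your version is slightly more careful in two places. You justify that every other path from $\ast_1$ to $x$ is homotopic to some $\gamma\ell$. You also re-choose $\ell_x$ as $\gamma\ell_x$, which is the right choice; the paper's text writes $\gamma^{-1}\ell_x$, which is a sign slip.
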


\begin{proof}
(i) We have $f\circ(\gamma\ell)=(f\circ\gamma)(f\circ\ell)$ and
$(\gamma\ell)^{-1}=\ell^{-1}\gamma^{-1}$. Thus, we get
$(f\circ(\gamma\ell))(\gamma\ell)^{-1}=(f\circ\gamma)(f\circ\ell)\ell^{-1}\gamma^{-1}$. It follows that these two loops are Reidemeister equivalent and thus they represent the same Reidemeister class.

(ii) Suppose $l$ is a path from $x$ to $y$ such that $l\simeq f\circ\ell$ keeping the endpoints fixed. We choose a path $\ell_x$ from $\ast_1$ to $x$ and we choose the path $\ell_y$ from $\ast_1$ to $y$ to be $\ell_xl$. Then
$
(f\circ\ell_y)\ell_y^{-1}=(f\circ\ell_x)(f\circ l)l^{-1}\ell_x^{-1}$. But $(f\circ\ell_x)(f\circ l)l^{-1}\ell_x^{-1}\simeq (f\circ\ell_x)\ell_x^{-1}$. Thus, $R(y)=R(x)$. Conversely, if
$R(y)=f_{\pi}(\gamma)R(x)\gamma^{-1}$, for $\gamma\in \pi$, then by replacing $\ell_x$ by $\gamma^{-1}\ell_x$ we get that $R(x)=R(y)$ and hence $(f\circ\ell_x)\ell_x^{-1}=(f\circ\ell_y)\ell_y^{-1}$. Hence, we can define the path
$l:=\ell_x^{-1}\ell_y$ from $x$ to $y$ that satisfies the relation $l\simeq f\circ l$ while keeping the
endpoints fixed.
\end{proof}

From the above result we know that two fixed points belong to the same Nielsen class if and only if they have the same Reidemeister class. Thus, $\mathrm{NC}(f)$ can be viewed as a subset of $ \mathcal{R}(f_\pi)$. For $\alpha \in \mathcal{R}(f_\pi)$, define $\mathrm{Fix}_\alpha(f) = \{x \in \mathrm{Fix}(f) \ | \ R(x) = \alpha\}$. Then, $$\text{Fix}(f)=\bigcup_{\alpha\in \mathcal{R}(f_\pi)} \mathrm{Fix}_\alpha(f).$$
If $\mathrm{Fix}_\alpha(f)$ is non-empty, it forms a Nielsen class with Reidemeister class $\alpha$. Note that the compactness of $X$ implies that $\mathrm{Fix}_\alpha(f)$ is empty except for finitely many $\alpha$.

\section{Mixed Configuration spaces}\label{mix}

     In this section we introduce the notion of mixed configuration space. Throughout, $f$ denotes an element of $\Homeo(\BB^3,C)$ which is the identity on the collar of
$\partial\BB^3$, and $\{h_t\}$ an isotopy from $\id$ to $f$ through such homeomorphisms. We denote by
$\beta:=b_{f,C}\in\LB_n$ the induced loop braid. We write $\phi_\beta$ for the automorphism of $\pi_1(M)\cong\Fn$ induced by $\beta$, which is denoted as $f_\pi$ in Section \ref{pre}. 
Moreover, we denote by $M$ the space $\Int(\BB^3)\setminus C$, which is homotopy equivalent to a wedge of $n$ circles and $n$
spheres. It follows that $\pi_1(M)$ is isomorphic to the free group of rank $n$. That is, $\pi_1(M)\cong F_n$ generated by $\{x_1,\dots,x_n\}$, where $x_i$ corresponds to the meridian of $C_i$. The reason behind considering mixed configurations spaces is to study fixed points of $f$.

     \begin{definition}
     For $n, k\in \NN$ we define the $n,k$ \textit{mixed configuration space} to be the configuration space that consists of distinct circles and distinct ordered points. That is, $$\MM_{n,k}:=\{(c, p_1,\dots, p_k)\ |\ c\in \R_n,\ p_i\neq p_j\ \text{for}\ 1\leq i\neq j\leq k\ \text{and}\ p_i \in \Int(\BB^3)\setminus c\}.$$
         
     \end{definition}

The space $\MM_{n,k}$ is a submanifold of $\R_n\times \Int(\BB^3)^k$ and is of dimension $4n+3k$. We fix a trivial link $C=C_1\sqcup \cdots \sqcup C_n$ of disjoint unknotted circles in parallel planes inside $\BB^3$ and $k$ distinct points $*_1,\dots,*_k$ in the collar of the boundary of $\BB^3$ that will be used as base-points. We denote by $\CM_{n,k}$ the fundamental group of $\MM_{n,k}$. That is, $\CM_{n,k}:=\pi_1(\MM_{n,k}, (C, *_1,\dots,*_k))$.

We consider the projection $p:\MM_{n,k}\to \R_n$ that forgets the $k$ points. We note that we can define a continuous section $s$ of $p$. The map $s: \MM_n\to \MM_{n,k}$ defined by $s(c):=(c,*_1,\dots,*_k)$ is a continuous section of $p$ since the circles $c$ lie in $\Int(\BB^3)$ and the marked points $*_1,\dots,*_k$ lie in the collar of $\BB^3$. In the following theorem we prove that the map $p$ is actually a locally trivial fibration.  
     
\begin{theorem}\label{fibration}
The projection $p:\MM_{n,k}\to \R_n$ is a locally trivial fibration, with fiber $F:=\Conf_k(M)$.
\end{theorem}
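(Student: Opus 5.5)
The plan is the standard Fadell--Neuwirth strategy: build, over a small neighbourhood of each configuration, a continuous family of homeomorphisms of $\BB^3$ that move the base configuration onto nearby ones, and use it to trivialize $p$ locally. Fix $c_0\in\R_n$, and lift it to an ordered configuration $(c_0^1,\dots,c_0^n)\in\widetilde{\R}_n$. Since $\widetilde{\R}_n\to\R_n$ is a finite covering, it suffices to work on a small neighbourhood $V$ of the lift on which the covering map is a homeomorphism.

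A Euclidean circle parallel to $P_0$ is determined by its centre in $\Int(\BB^3)$ and its radius $r>0$. So $V$ can be taken to be a product of small balls in $\RR^3\times\RR_{>0}$, one ball for the centre and radius of each $c^i$.

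\textbf{Step 1: the homeomorphisms.} Choose pairwise disjoint closed regions $N_1,\dots,N_n\subset\Int(\BB^3)$, each a small closed tubular neighbourhood of $c_0^i$ (a solid torus), or a slightly larger ball containing it if that is more convenient. Shrink $V$ so that every circle $c^i$ of a configuration in $V$ lies in the interior of $N_i$. For each $i$ I will construct a continuous map
\[
V_i\to \Homeo(\BB^3),\quad c^i\mapsto \theta^i_{c^i},
\]
supported in $N_i$, with $\theta^i_{c_0^i}=\id$ and $\theta^i_{c^i}(c_0^i)=c^i$. Concretely, $\theta^i_{c^i}$ is the composition of a translation of the centre and a radial dilation in the plane of the circle, each damped by a bump function, plus a vertical shift also damped by a bump function. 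This is the explicit, ball-version of the isotopy-extension construction. Set $\theta_c=\theta^1_{c^1}\circ\cdots\circ\theta^n_{c^n}$; since the supports are disjoint, $\theta_c(C_0)=c$, and $\theta_c$ depends continuously on $c$ in the compact-open topology, as does its inverse.

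\textbf{Step 2: the trivialization.} Define
\[
\Phi: V\times \Conf_k(\Int(\BB^3)\setminus c_0)\to p^{-1}(V),\qquad \Phi(c,q_1,\dots,q_k)=(c,\theta_c(q_1),\dots,\theta_c(q_k)).
\]
Because $\theta_c$ is a homeomorphism carrying $c_0$ to $c$ and preserving $\Int(\BB^3)$, the points $\theta_c(q_j)$ are distinct and avoid $c$, so $\Phi$ is well defined. Its inverse is $(c,p_1,\dots,p_k)\mapsto(c,\theta_c^{-1}(p_1),\dots,\theta_c^{-1}(p_k))$. Both maps are continuous: evaluation $(h,x)\mapsto h(x)$ is continuous on $\Homeo\times\BB^3$ for a compact space, and inversion is continuous in $\Homeo(\BB^3)$. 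By construction $p\circ\Phi$ is the projection to $V$. Finally, $\R_n$ is connected, so all the fibers $\Conf_k(\Int(\BB^3)\setminus c_0)$ are homeomorphic to $\Conf_k(M)$ for the fixed link $C$, which gives the fiber $F$.

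\textbf{Main obstacle.} The delicate point is Step 1: the homeomorphism has to be written down explicitly enough that its continuous dependence on the parameters is evident. I would do it in cylindrical coordinates adapted to each circle. A radial map $(\rho,z)\mapsto(\lambda(\rho)\rho,\,z+\mu(\rho,z)\delta)$ combined with a damped translation is a homeomorphism provided the parameters are small relative to the derivative bounds of the bump functions, which is guaranteed by shrinking $V$. This needs the $N_i$ to stay away from each other and from $\partial\BB^3$, which holds because the circles are compact and disjoint in $\Int(\BB^3)$. Note that the circles being unlinked plays no role in the fibration argument; it only affects which configuration space the fiber lives over.
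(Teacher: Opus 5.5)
Your proposal is correct and follows essentially the same route as the paper: a continuous family of homeomorphisms $\theta_c$, supported in disjoint neighbourhoods of the circles and carrying $c_0$ to nearby $c$, gives the trivialization $(c,q)\mapsto(c,\theta_c(q_1),\dots,\theta_c(q_k))$, continuity comes from evaluation and inversion in $\Homeo(\BB^3)$, and connectedness of $\R_n$ identifies the fiber. The paper differs only in using a single damped affine map $y\mapsto y+\sum_i\chi_i(y)(A_i^c(y)-y)$, with bijectivity checked via $\|D\Phi_c-I\|\le\lambda<1$, which is the same derivative-smallness argument you sketch; note that your dilation factor must also be damped in $z$, i.e.\ $\lambda=\lambda(\rho,z)$, so that the support stays inside $N_i$.
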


\begin{proof}
We want to prove that for every $c\in \R_n$ there exists a neighborhood $V$ of $c$ and a homeomorphism $\phi: V\times F\to p^{-1}(V)$, with $p\circ\phi=pr_V$, where $pr_V:V\times F\to V$ is the projection to $V$. From this it follows that $p^{-1}(c)\cong F$ for every $c\in \R_n$.

Suppose that for some neighborhood $V$ of $c_0\in \R_n$ we have, for each $c\in V$, a homeomorphism $\Phi_c$ of $\BB^3$, such that $\Phi_c(c_0)=c$ as subsets of $\BB^3$ and such that the map from $V$ to $\Homeo(\BB^3)$ that takes $c$ to $\Phi_c$ is continuous. Let $F_0:=\Conf_k(\Int(\BB^3)\setminus c_0)$. We can then define the map $\phi_0: V\times F_0\to p^{-1}(V)$ given by $$\phi_0(c,y_1,\dots,y_k)=(c,\Phi_c(y_1),\dots,\Phi_c(y_k)).$$
We note that the image of $V\times F_0$ under $\phi_0$ is indeed in $p^{-1}(V)\subset\MM_{n,k}$. For $y_i\notin c_0$ then $\Phi_c(y_i)\notin \Phi_c(c_0)=c$, since $\Phi_c$ is injective. Thus, the points $\Phi_c(y_i)$ are pairwise distinct and do not lie on $c$. Moreover, $\phi_0$ is a homeomorphism with inverse
$\phi_0^{-1}(c,y_1,\dots,y_k)=(c,\Phi^{-1}_c(y_1),\dots,\Phi^{-1}_c(y_k))$ and $p\circ\phi_0=pr_V$. Indeed, the
evaluation map $\mathrm{ev}:\Homeo(\BB^3)\times\BB^3\to\BB^3$, given by $\mathrm{ev}(\Psi,y):=\Psi(y)$, is continuous for the
compact-open topology, $\BB^3$ being compact, so each coordinate 
of $\phi_0$ is continuous. Since inversion is also continuous on $\Homeo(\BB^3)$, the same argument applies to $\phi_0^{-1}$. We conclude that $\phi_0$ is a trivialization of $p$ over $V$ with fiber $F_0$. In addition, note that $\R_n$ is path connected, since an appropriate shrinking of the radii followed by a translation of the centers joins any configuration to a standard one, the circles remaining disjoint throughout. So isotopy extension provides a homeomorphism $h$ of $\BB^3$ with $h(C)=c_0$ from which we can deduce a homeomorphism $\tilde{h}$ from $F$ to $F_0$, where $(y_1,\dots,y_k)$ is mapped to $(h(y_1),\dots,h(y_k))$. As a result, we can construct the desired homeomorphism $\phi:=\phi_0\circ(\id_V\times\tilde{h})$  from $V\times F\to p^{-1}(V)$ such that $p\circ\phi=pr_V$. Therefore, it suffices to construct the family of homeomorphisms $\{\Phi_c\}_{c\in V}$ of $\BB^3$.

Fix an ordering $c_0=(\rho^0_1,\dots,\rho^0_n)$ of the circles of $c_0$, that is, a preimage of
$c_0$ under the covering $\widetilde{\R}_n\to\R_n$. Since the map is a covering, there is a neighborhood
$V_1$ of $c_0$ and a unique continuous way of ordering the circles of every $c\in V_1$ that agrees
with the chosen ordering at $c=c_0$. For $c\in V_1$ we may therefore write
$c=(\rho_1,\dots,\rho_n)$. Note that each $\rho_i^0$ is a circle determined by its center $z_i^0$ and its radius $r_i^0$. That is, $\rho_i^0=(z_i^0, r_i^0)$. 
We choose pairwise disjoint compact balls $T_1,\dots,T_n$ in the interior of $\frac{3}{4}\BB^3$, such that $\rho_i^0$ lives in the interior of $T_i$. Moreover, we choose bump functions $\chi_i:\RR^3\to[0,1]$ supported in $T_i$ and equal to $1$ on a neighborhood of $\rho_i^0$. In addition, for any $c=(\rho_1,\dots,\rho_n)\in V_1$ we define the affine transformation $A_i^c$, which sends $\rho_i^0$ of $c_0$ onto $\rho_i$ of $c$. That is, $A_i^c:\RR^3\to\RR^3$ is given by $$A_i^c(y):=z_i+\frac{r_i}{r_i^0}(y-z_i^0),$$ which is actually a dilation followed by a translation that carries $(z_i^0, r_i^0)$ to $(z_i, r_i)$. Note that $A_i^c$ has no rotational part and it maps each plane to a parallel plane. Finally, for each $c\in V_1$ we define $\Phi_c:\BB^3\to\BB^3$ by $$\Phi_c(y):=y+\sum_{i=1}^n\chi_i(y)(A_i^c(y)-y).$$

By construction, $\Phi_c$ is continuous, $\Phi_{c_0}=\id$, $\Phi_c$ is the identity outside the set $\bigcup_iT_i$ and thus also near $\partial\BB^3$. Since on a neighborhood of $\rho^0_i$ the map $\chi_i$ equals $1$, then $\Phi_c$ agrees with $A^c_i$. As a result we have $\Phi_c(\rho^0_i)=A^c_i(\rho^0_i)=\rho_i$ for each $1\leq i\leq n$. That is, $\Phi_c(c_0)=c$ as subsets of $\BB^3$.

In what follows we will prove that $\Phi_c$ is a local diffeomorphism, injective and surjective. As $c$ tends to $c_0$, the maps $A_i^c$ converge to the identity in $C^1$ on the compact set
$\bigcup_iT_i$, since $A^c_i(y)-y=(z_i-z^0_i)+\left(\frac{r_i}{r^0_i}-1\right)(y-z^0_i)$ has both its
values and its derivative tending to $0$. As the $\chi_i$ are fixed, it follows that
$\|D\Phi_c-I\|$ tends to $0$ as well, so there is a neighborhood $V:=V_2\subseteq V_1$ of $c_0$ and a constant
$\lambda<1$ with $\|D\Phi_c-I\|\le\lambda$ for all $c\in V$. Thus, $\Phi_c$ has invertible derivative and so it is a local diffeomorphism. Moreover, $\Phi_c$ is injective. Since $\|D\Phi_c-I\|\le\lambda$, by the mean value inequality, it follows that $|(\Phi_c(y)-y)-(\Phi_c(y')-y')|\leq\lambda|y-y'|$ and therefore
$
|\Phi_c(y)-\Phi_c(y')|\ \ge\ |y-y'|-\lambda\,|y-y'|\ =\ (1-\lambda)\,|y-y'| .
$
Since $\lambda<1$, the equality $\Phi_c(y)=\Phi_c(y')$ forces $y=y'$. Finally, $\Phi_c$ is surjective. The image of $\BB^3$ under $\Phi_c$ is both open and closed. It is open because the points of $\partial\BB^3$ are fixed by
$\Phi_c$, and at interior points $\Phi_c$ is a local homeomorphism, hence an open map. It is also
closed, because it is the image of the compact set $\BB^3$ under the continuous map $\Phi_c$. As $\BB^3$ is connected and the image is
nonempty, $\Phi_c(\BB^3)=\BB^3$.

All together, we deduce that $\Phi_c$ is a diffeomorphism of $\BB^3$, since it is a bijective local diffeomorphism. In
particular it is a homeomorphism of $\BB^3$, which is all we need. Moreover, the map from $V$ to $\Homeo(\BB^3)$ that takes $c$ to $\Phi_c$ is continuous since $\Phi_c(y)$ depends continuously on $(c,y)$ and $\BB^3$ is compact. This is the required family
$\{\Phi_c\}_{c\in V}$ and this completes the proof.
\end{proof}

This result can be considered as an analogue of the Fadell--Neuwirth fibration for configuration spaces of points
in a manifold \cite{FadellNeuwirth}, which is one of the basic tools in the study of surface braid groups, see \cite{GuaschiJuanPineda2015}.

\begin{remark}\label{section}
The marked base-points $\ast_1,\dots,\ast_k$ in the collar of $\partial\BB^3$ are chosen once and for all and are
independent of the configuration in $\MM_{n,k}$. Thus, the map
$s\colon \R_n\longrightarrow\MM_{n,k}$ given by $s(c)=(c,\ast_1,\dots,\ast_k),$
is continuous and $p\circ s=\id$. Hence, $s$ is a section for the map $p: \MM_{n,k}\to \R_n$.
\end{remark}

In the following lemma we compute the fundamental group of the fiber of $p$, which we will use for proving in Proposition \ref{ses} the existence of a short exact sequence.

\begin{lemma}\label{free}
 For $k\geq1$, the following isomorphism hold: $$\pi_1(\Conf_k(M))\cong \pi_1(M)^k.$$
 In particular, $\pi_1(\Conf_k(M))\cong\Fn^k$, where $\Fn$ denotes the free group of rank $n$. 
\end{lemma}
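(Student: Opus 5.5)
The plan is to compare $\Conf_k(M)$ with the full product $M^k$ and show that the inclusion induces an isomorphism on fundamental groups. Write
$$\Conf_k(M)=M^k\setminus\Delta,\qquad \Delta=\bigcup_{1\le i<j\le k}\Delta_{ij},\qquad \Delta_{ij}=\{(y_1,\dots,y_k)\in M^k\ |\ y_i=y_j\},$$
and let $\iota\colon\Conf_k(M)\hookrightarrow M^k$ be the inclusion. The whole argument rests on one dimension count: $M$ is an open $3$-manifold, so each $\Delta_{ij}$ is a closed smooth submanifold of $M^k$ of codimension $3$. I claim that $\iota_*\colon\pi_1(\Conf_k(M))\to\pi_1(M^k)$ is an isomorphism. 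This gives
$$\pi_1(\Conf_k(M))\cong\pi_1(M^k)\cong\pi_1(M)^k.$$
Since $\pi_1(M)\cong\Fn$, as recalled at the beginning of Section \ref{mix}, the conclusion $\pi_1(\Conf_k(M))\cong\Fn^k$ follows. The base-point can be taken to be $(\ast_1,\dots,\ast_k)$, which lies in $\Conf_k(M)$.

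\textbf{Surjectivity.} Take a loop $\gamma\colon S^1\to M^k$ based at $(\ast_1,\dots,\ast_k)$. Homotope it rel the base-point to a smooth loop, and then by a small perturbation make it transverse to each $\Delta_{ij}$. There are finitely many such submanifolds. Since $\dim S^1+\operatorname{codim}\Delta_{ij}=1+3<\dim$ would be needed for an intersection, more precisely $1<3$, transversality means the perturbed loop misses $\Delta$ entirely. The perturbation can be chosen small, so it stays in the same homotopy class. Note that $\Delta$ is closed in $M^k$, so we never approach $\partial\BB^3$ or $C$ in any problematic way.

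\textbf{Injectivity.} Take a loop in $\Conf_k(M)$ that is null-homotopic in $M^k$, and choose a null-homotopy $H\colon D^2\to M^k$. Its boundary already avoids $\Delta$, and since $\Delta$ is closed it avoids a neighbourhood of $\Delta$. Perturb $H$ rel a collar of $\partial D^2$ to a smooth map transverse to every $\Delta_{ij}$. Because $2<3$, the perturbed disc is disjoint from $\Delta$. This yields a null-homotopy inside $\Conf_k(M)$.

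The main technical point is this relative transversality step. One needs a version of the transversality theorem for maps of $D^2$ that are fixed near the boundary, into a non-compact manifold, relative to finitely many closed submanifolds. I would invoke the standard parametric transversality theorem, for instance as in Guillemin--Pollack or Hirsch, applied to a homotopy of the Whitney-approximated map.

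An alternative route, if one wants to stay closer to the fibration language of Theorem \ref{fibration}, is induction on $k$ via the Fadell--Neuwirth fibration $\Conf_k(M)\to\Conf_{k-1}(M)$. Its fiber is $M$ with $k-1$ points removed, and removing points from a $3$-manifold does not change $\pi_1$, so the fiber still has $\pi_1\cong\Fn$. A section exists by adding a point near $\partial\BB^3$. However, controlling $\pi_2$ of the base in the long exact sequence is less clean than the direct general position argument, so I would use general position as the main argument.
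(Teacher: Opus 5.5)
Your proposal is correct and follows essentially the same route as the paper: both show that the inclusion $\Conf_k(M)=M^k\setminus\Delta\hookrightarrow M^k$ is a $\pi_1$-isomorphism by general position, because the fat diagonal has codimension $3$, so loops ($1<3$) and null-homotopy discs ($2<3$) can be pushed off it. Your version is slightly more careful than the paper's in doing the perturbations relative to the base-point and to a collar of $\partial D^2$, which is exactly what the argument needs.
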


\begin{proof}
We will use transversality to prove the claim. The fat diagonal $\Delta$ of $M^k$, that is the subspace of a $M^k$ where at least two of the coordinate components are equal, is a finite union of submanifolds of dimension $3(k-1)$. The inclusion $i$ of $M^k\setminus\Delta$ into $M^k$ induces a surjective map $i^*:\pi_1(\Conf_k(M))=\pi_1(M^k\setminus\Delta)\to \pi_1(M^k)$. This holds because, by transversality, every map of a $1$-complex into $M^k$ may be perturbed away from the fat diagonal $\Delta$, since $1+3(k-1)<3k$. Moreover, the induced map $i^*$ is injective because, again by transversality, every map of a $2$-complex, which corresponds to null-homotopies, into $M^k$ can be perturbed away from $\Delta$, since $2+3(k-1)<3k$. Therefore, $\pi_1(\Conf_k(M))\cong \pi_1(M^k)$, where the isomorphism is induced by the coordinate projection. Note that $\pi_1(M^k)=\pi_1(M)^k$ and since $\pi_1(M)=\Fn$ it follows that $\pi_1(\Conf_k(M))\cong\Fn^k$. 
\end{proof}

 Let $\R_{n,1}$ be the space of
configurations of $n+1$ such circles with the last one distinguished, and set
$\LB_{n,1}:=\pi_1(\R_{n,1})$. Equivalently, $\LB_{n,1}$ is the subgroup of $\LB_{n+1}$ consisting of
the elements whose underlying permutation fixes the last component. We denote by
$q\colon\LB_{n,1}\to\LB_n$ the homomorphism that forgets the last circle. In the following result we show that we can actually replace the point of the configuration space $\MM_{n,1}$ with a circle and obtain an element in $\R_{n,1}$. In this way we can associate an element of $\LB_{n,1}$ to an element in $\CM_{n,1}$, which we will use in the upcoming Lemma \ref{correspondance}

\begin{lemma}\label{thickening}
There is a homomorphism $t:\CM_{n,1}\to \LB_{n,1},$
that satisfies $q\circ t=p_*$, where $p_*:\CM_{n,1}\to \LB_n$, the map induced by the projection $p:\MM_{n,1}\to \R_n$.

\end{lemma}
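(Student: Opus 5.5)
The plan is to build a continuous map $T:\MM_{n,1}\to\R_{n,1}$ that replaces the marked point by a small Euclidean circle centered at it, and then set $t:=T_*$. The relation $q\circ t=p_*$ will come from the identity $\bar q\circ T=p$ at the level of spaces, where $\bar q\colon\R_{n,1}\to\R_n$ forgets the distinguished circle. There is no canonical global radius, so first I replace $\MM_{n,1}$ by a homotopy equivalent subspace on which a uniform choice is possible. This subspace must contain the base point $(C,\ast_1)$.

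\textbf{Step 1 (radius function).} For $(c,x)\in\MM_{n,1}$ set
\[
\delta(c,x):=\tfrac13\min\{d(x,c),\,d(x,\partial\BB^3)\}>0,
\]
which is continuous in $(c,x)$ because $c$ varies continuously in the Hausdorff metric. Define $T(c,x):=(c,\gamma_x)$, where $\gamma_x$ is the circle of radius $\delta(c,x)$ centered at $x$ and lying in the plane through $x$ parallel to $P_0$. We need $\gamma_x$ to avoid each circle $\rho_i$ of $c$ and to stay inside $\Int(\BB^3)$. Both hold because every point of $\gamma_x$ lies within $\delta<d(x,c)$ of $x$, and likewise for the boundary. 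Continuity of $T$ is clear from the continuity of $\delta$.

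\textbf{Step 2 (unlinking).} The delicate condition is that $\gamma_x$ must be \emph{unlinked} from each $\rho_i$, and $\gamma_x$ must not be linked with any component. This follows because $\gamma_x$ bounds the flat disc of radius $\delta$ about $x$, which misses $c$ entirely by the choice of $\delta$. That disc gives a null-homotopy of $\gamma_x$ in the complement of $c$, so $\gamma_x$ is split from $c$. The same disc, with the disjointness argument run in reverse (each $\rho_i$ bounds its own flat disc), shows the pair is pairwise unlinked, and the whole configuration is a trivial link. Hence $T(c,x)\in\R_{n,1}$, and we take $T(C,\ast_1)$ as the base point of $\R_{n,1}$; this choice is compatible with Remark \ref{section}. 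I expect this verification that the configuration stays in the untwisted, unlinked component to be the main obstacle. If needed, I would rely on the fact, used in \cite{BrendleHatcher}, that $\R_{n+1}$ is defined precisely by this parallel-planes, pairwise-unlinked condition, so only pairwise unlinking needs checking. Pairwise unlinking is immediate from the disjoint bounding disc.

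\textbf{Step 3 (conclusion).} By construction $\bar q(T(c,x))=c=p(c,x)$, so $\bar q\circ T=p$. Passing to fundamental groups gives a homomorphism $t:=T_*\colon\CM_{n,1}\to\LB_{n,1}$. Functoriality then gives $q\circ t=(\bar q\circ T)_*=p_*$, identifying $q=\bar q_*$.

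Finally, $T$ lands in $\R_{n,1}$ rather than merely $\widetilde{\R}_{n+1}/S_n$, because the new circle is always the distinguished one. This is why the image lies in the subgroup of $\LB_{n+1}$ whose permutation fixes the last component, as required.
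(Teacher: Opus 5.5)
Your proposal is correct and is essentially the paper's argument: you replace the marked point by a round circle centered at it, lying in the plane parallel to $P_0$ and with a continuous positive radius, note that forgetting that circle recovers $p$, and set $t=T_*$. Your radius $\tfrac13\min\{d(x,c),d(x,\partial\BB^3)\}$ is a small improvement over the paper's $\tfrac12\operatorname{dist}(y,c)$, which does not by itself keep the new circle inside $\Int(\BB^3)$ when the point is near $\partial\BB^3$ (e.g.\ at the base point $\ast_1$ in the collar); the homotopy-equivalent subspace announced in your first paragraph is never used and can be dropped.
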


\begin{proof} The homomorphism $t$ is induced from the map $T$ which is defined by replacing the marked point in $\MM_{n,1}$ by a circle $c_{n+1}$ of small radius centered at the point and lying in a plane parallel to the fixed one. 
For $(c,y)\in \MM_{n,1}$ let $T(c,y)\in\R_{n,1}$ be the configuration consisting of $c$
together with the circle $c_{n+1}$ of radius $\frac{1}{2}\operatorname{dist}(y,c)$ centered at $y$ and
lying in the plane through $y$ parallel to $P_0$. The element $T(c,y)$ lies indeed in $\R_{n,1}$ since the new circle
lies in a plane parallel to $P_0$, it is contained in a ball disjoint from $c$, and it is unlinked
from the other circles. As
$\operatorname{dist}(y,c)$ is continuous and strictly positive on $\MM_{n,1}$, the map $T$ is
continuous. Thus, it induces a homomorphism on fundamental groups, that is $t:\CM_{n,1}\to \LB_{n,1}$. Finally, by construction of $T$ the equality $q\circ t=p_*$ follows and this completes the proof.
\end{proof}

In the remaining part of this section we aim to prove the splitting of a short exact sequence which will be the main ingredient in proving Theorem \ref{thm1}. Before presenting this result we need the following two lemmas. We recall that the loop braid group and its pure subgroup, that is, $\LB_n$ and $\PLB_n$ can be seen as subgroups of $\Aut(\Fn)$, where their action on $\Fn$ has been described in Section \ref{pre}. Let $F_{n+1}=\langle x_1,\dots,x_{n+1}\rangle$. Recall that each $x_i$ corresponds to the meridian of the $i^{th}$ circle. For an element $u\in \Fn$ and for $1\leq i\leq n$, we define $\alpha_u, \alpha_{i,n+1}\in \PLB_{n+1}\subset\Aut(F_{n+1})$ by: $$\alpha_u(x_{n+1})=u^{-1}x_{n+1}u, \alpha_u(x_i)=x_i\ \text{and}\ \alpha_{i,n+1}(x_i)=x_{n+1}^{-1}x_ix_{n+1}, \alpha_{i,n+1}(x_j)=x_j\ \text{for all}\ j\neq i.$$
From a geometric point of view the element $\alpha_u\in \PLB_{n+1}$ is the pure loop braid where the last circle travels along $u$, while the other circles stay fixed, and the element $\alpha_{i,n+1}\in \PLB_{n+1}$ is the pure loop braid whose $i^{th}$ circle is passing through the last circle and returns back to its position, while the other circles stay fixed.

\begin{lemma}\label{map g}
The map $g:F_n\to \PLB_{n+1}\subset\Aut(F_{n+1})$ defined by $g(u)=\alpha_u$ is an injective homomorphism with image $U_{n+1}:=g(F_n)$ isomorphic to $F_{n}$. Moreover, for $\phi\in \LB_n\subset \Aut(F_n)$, seen as permutation-conjugacy automorphism of the free group, we have $\alpha_{\phi(u)}=\bar{\phi}\alpha_u\bar{\phi}^{-1}$, where $\bar{\phi}\in \LB_{n+1}\subset \Aut(F_{n+1})$ is the canonical extension of $\phi$, that is, $\bar{\phi}(x_i)=\phi(x_i)$ for $1\leq i \leq n$ and $\bar{\phi}(x_{n+1})=x_{n+1}$.
\end{lemma}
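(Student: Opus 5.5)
The plan is to verify everything directly at the level of automorphisms of $F_{n+1}=\langle x_1,\dots,x_{n+1}\rangle$, using Dahm's characterization recalled in Section \ref{pre}: $\LB_{n+1}\subset\Aut(F_{n+1})$ is exactly the group of permutation-conjugacy automorphisms $x_i\mapsto W_i^{-1}x_{\pi(i)}W_i$, and $\PLB_{n+1}$ is the subgroup with $\pi=\id$. Composition is composition of maps, $(\psi\psi')(x)=\psi(\psi'(x))$. Throughout I regard $F_n=\langle x_1,\dots,x_n\rangle$ as a free factor of $F_{n+1}=F_n*\langle x_{n+1}\rangle$.

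\textbf{$\alpha_u$ lies in $\PLB_{n+1}$, and $g$ is a homomorphism.} By definition $\alpha_u$ fixes every $x_i$ with $i\le n$. It therefore restricts to the identity on $F_n$, and in particular $\alpha_u(v)=v$ for every $v\in F_n$. For $u,v\in F_n$ this gives
\[
\alpha_u\alpha_v(x_{n+1})=\alpha_u(v^{-1}x_{n+1}v)=v^{-1}u^{-1}x_{n+1}uv=\alpha_{uv}(x_{n+1}),
\]
and both sides fix $x_1,\dots,x_n$, so $\alpha_u\alpha_v=\alpha_{uv}$. Also $\alpha_1=\id$. Consequently $\alpha_u\alpha_{u^{-1}}=\id=\alpha_{u^{-1}}\alpha_u$, so each $\alpha_u$ is an automorphism. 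It has the form $x_i\mapsto W_i^{-1}x_iW_i$ with $W_i=1$ for $i\le n$ and $W_{n+1}=u$, so it is basis-conjugating with trivial permutation. Hence $\alpha_u\in\PLB_{n+1}$, and $g$ is a homomorphism.

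\textbf{Injectivity of $g$.} If $\alpha_u=\id$, then $u^{-1}x_{n+1}u=x_{n+1}$, so $u$ lies in the centralizer of $x_{n+1}$ in the free group $F_{n+1}$. That centralizer is the cyclic group $\langle x_{n+1}\rangle$, because $x_{n+1}$ is primitive and centralizers of nontrivial elements of a free group are cyclic. Since $\langle x_{n+1}\rangle\cap F_n=\{1\}$ (a free-factor, or normal-form, argument), we get $u=1$. Thus $g$ is injective and $U_{n+1}=g(F_n)\cong F_n$. I expect this to be the only step needing a genuine fact about free groups, but it is standard.

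\textbf{The equivariance formula.} First I check that $\bar\phi$ is well defined and lies in $\LB_{n+1}$. On the free product $F_n*\langle x_{n+1}\rangle$, $\bar\phi=\phi*\id$, so it is an automorphism with inverse $\phi^{-1}*\id$. If $\phi(x_i)=W_i^{-1}x_{\pi(i)}W_i$, then $\bar\phi$ is permutation-conjugacy with permutation $\pi$ extended by $n+1\mapsto n+1$ and $W_{n+1}=1$, so $\bar\phi\in\LB_{n+1}$.

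Next I evaluate both sides on generators. For $i\le n$, $\bar\phi^{-1}(x_i)=\phi^{-1}(x_i)\in F_n$ is fixed by $\alpha_u$, so
\[
\bar\phi\alpha_u\bar\phi^{-1}(x_i)=\bar\phi\bigl(\phi^{-1}(x_i)\bigr)=x_i=\alpha_{\phi(u)}(x_i).
\]
For $x_{n+1}$, using $\bar\phi^{-1}(x_{n+1})=x_{n+1}$ and $\bar\phi(u)=\phi(u)$, we get
\[
\bar\phi\alpha_u\bar\phi^{-1}(x_{n+1})=\bar\phi(u^{-1}x_{n+1}u)=\phi(u)^{-1}x_{n+1}\phi(u)=\alpha_{\phi(u)}(x_{n+1}).
\]
Since the two automorphisms agree on a basis, $\alpha_{\phi(u)}=\bar\phi\alpha_u\bar\phi^{-1}$.

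The only point requiring care is keeping the left/right conventions consistent with the paper's left-conjugation convention. With composition as maps, as fixed above, the computation goes through exactly as written.
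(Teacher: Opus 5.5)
Your proof is correct and follows essentially the same route as the paper. Both check $\alpha_u\alpha_v=\alpha_{uv}$ on the basis $x_1,\dots,x_{n+1}$, get injectivity from the centralizer of $x_{n+1}$ being $\langle x_{n+1}\rangle$ with $\langle x_{n+1}\rangle\cap F_n=\{1\}$, and verify $\alpha_{\phi(u)}=\bar\phi\alpha_u\bar\phi^{-1}$ on generators. You also justify points the paper leaves implicit: that $\alpha_u$ is invertible and lies in $\PLB_{n+1}$, that $\bar\phi\in\LB_{n+1}$, and why $\bar\phi\alpha_u\bar\phi^{-1}$ fixes $x_i$ for $i\le n$.
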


\begin{proof}
    For $u,v\in F_n$ one can verify that $\alpha_{uv}=\alpha_u\circ\alpha_v$. Moreover, $g$ is injective because if $\alpha_u=\alpha_v$ then $u^{-1}x_{n+1}u=v^{-1}x_{n+1}v$, which means that $vu^{-1}$ is in the centralizer of $x_{n+1}$ in $F_{n+1}$, which is the cyclic subgroup generated by $\langle x_{n+1}\rangle$ and thus $vu^{-1}\in \langle x_{n+1}\rangle\cap F_n=\{1\}$. Finally, $\bar{\phi}\alpha_u\bar{\phi}^{-1}(x_{n+1})=\bar{\phi}\alpha_u(x_{n+1})=\bar{\phi}(u^{-1}x_{n+1}u)=(\bar{\phi}(u))^{-1}x_{n+1}\bar{\phi}(u)$ and $\bar{\phi}\alpha_u\bar{\phi}^{-1}(x_i)=x_i$, for all $1\leq i\leq n$. Therefore, $\alpha_{\phi(u)}=\bar{\phi}\alpha_u\bar{\phi}^{-1}$. 
\end{proof}

Let us consider the locally trivial fibration of Theorem \ref{fibration} for $k=1$. In this case the fiber $F$ becomes $\Conf_1(M)=M$. We denote by $\iota_*$ the map from $\pi_1(M)\cong F_n$ to $\pi_1(\MM_{n,1})=\CM_{n,1}$, which is induced by the inclusion of the fiber $M$ into $\MM_{n,1}$, defined by $\iota_*(u)=a_u$, where $a_u$ is the element in $\CM_{n,1}$ whose circles stay fixed while the marked point traverses $u$. We denote by $U$ the image of $F_n$ under $\iota_*$, that is, $U:=\iota_*(F_n)$.

In the following lemma we aim to identify the element $a_u\in\CM_{n,1}$, whose marked point traces $u$ while the $n$ circles stay fixed, with the element $\alpha_u\in \PLB_{n+1}\subset\Aut(F_{n+1})$, seen as the loop braid where only the last circle moves and traces $u$. In particular, we want to determine a relation between $U$ and $U_{n+1}$. We recall that in Lemma \ref{thickening} we introduced the homomorphism $t:\CM_{n,1}\to \LB_{n,1}$, which we will use in what follows.

\begin{lemma}\label{correspondance}
Let $u\in\pi_1(M, *_1)\cong\Fn$ and let $a_u\in  \CM_{n,1}$. Then, $t(a_u)=\alpha_u\in \PLB_{n+1}\subset\Aut(F_{n+1})$. In particular, $U$ is isomorphic to $U_{n+1}$ via $t$. 

\end{lemma}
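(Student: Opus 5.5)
We need to show that $t(a_u)=\alpha_u$ as automorphisms of $F_{n+1}$, and then deduce that $t|_U$ is an isomorphism onto $U_{n+1}$. The plan is to compute directly the automorphism of $F_{n+1}$ induced by the motion $t(a_u)$, using the Dahm--Goldsmith identification of $\LB_{n+1}$ with permutation-conjugacy automorphisms. That identification sends a motion of circles to the action on $\pi_1$ of the complement, realised by the end homeomorphism of an ambient isotopy extending the motion.

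\textbf{Step 1: an explicit representative and ambient isotopy.} Represent $u$ by a loop $\gamma\colon[0,1]\to M$ based at $*_1$. By transversality we may take $\gamma$ to be a smooth embedded arc-loop avoiding $C$, staying at distance at least some $\varepsilon>0$ from $C$. The element $a_u$ is represented by $s\mapsto (C,\gamma(s))$. Applying $T$ gives the motion in which $C$ stays still and a small circle $c_{n+1}(s)$, of radius $\tfrac12\operatorname{dist}(\gamma(s),C)$ and centred at $\gamma(s)$, travels along $\gamma$. Since $p\circ$ and $q\circ t$ agree, the underlying permutation is trivial, so $t(a_u)\in\PLB_{n+1}$.

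Shrinking radii is a homotopy in $\R_{n,1}$, so we may replace the radius by a constant $\delta<\varepsilon/2$. Choose a thin tubular neighbourhood $N$ of $\gamma$ disjoint from $C$. By isotopy extension, supported in $N$, there is an ambient isotopy $H_s$ of $\BB^3$ that fixes $C$ pointwise throughout and drags the small ball around $\gamma(0)$ along $\gamma$. Since $*_1$ lies in the collar, we may also move the base-point of $\pi_1$ off the path if needed.

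\textbf{Step 2: computing the induced automorphism.} Let $H_1$ be the end homeomorphism. It is supported in $N$ and fixes $C$, so it fixes the meridian loops $x_1,\dots,x_n$, which we choose away from $N$; hence it sends $x_i\mapsto x_i$ for $i\le n$. The meridian $x_{n+1}$ of the small circle is represented by a path $\eta$ from the base-point to the circle's initial position, followed by a small meridian loop $m$ and then $\eta^{-1}$.

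Its image under $H_1$ is $\eta$ followed by $m$ transported along $\gamma$. This is homotopic in the complement to $\eta\gamma^{\pm1} m\gamma^{\mp1}\eta^{-1}$. Reading this in $F_{n+1}$ gives $u^{\mp1}x_{n+1}u^{\pm1}$. The left-conjugation convention fixed in Section~\ref{pre} (the one for which $\sigma_i(x_i)=x_i^{-1}x_{i+1}x_i$) pins the sign down to $u^{-1}x_{n+1}u$. This matches the definition of $\alpha_u$.

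I expect the main obstacle to be exactly this convention bookkeeping: showing that the orientation of $\gamma$, together with the direction (pushforward versus pullback) in the identification $\LB_{n+1}\hookrightarrow\Aut(F_{n+1})$, yields $u^{-1}x_{n+1}u$ and not $ux_{n+1}u^{-1}$. I would settle it by testing on a generator, with $u=x_i$, and comparing with the geometric description of the generators and with Lemma~\ref{map g}, which forces $u\mapsto\alpha_u$ to be a homomorphism. A secondary point is that the meridians of $C_i$ are unaffected, which holds because $N\cap C=\emptyset$ and the loops can be chosen away from $N$.

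\textbf{Step 3: conclusion.} Since $t$ is a homomorphism and $t(a_u)=\alpha_u=g(u)$, we get $t\circ\iota_*=g$. By Lemma~\ref{map g}, $g$ is injective with image $U_{n+1}$. Hence $t\circ\iota_*$ is injective, so $\iota_*$ is injective, and $t|_U\colon U\to U_{n+1}$ is surjective. It is also injective, because $t(\iota_*(u))=1$ forces $u=1$. Therefore $U\cong U_{n+1}\cong\Fn$ via $t$.
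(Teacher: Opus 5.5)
Your proposal is correct and follows essentially the same route as the paper: realize $t(a_u)$ by an ambient isotopy supported in a tube around $\gamma$ disjoint from $C$, note that the meridians $x_1,\dots,x_n$ chosen off the tube are fixed while the path to the last circle is dragged along $\gamma$, giving $x_{n+1}\mapsto u^{-1}x_{n+1}u$, and then conclude with Lemma~\ref{map g}. Your extra steps make explicit what the paper leaves implicit: shrinking the radius to a constant so that a thin tube suffices, and flagging that the sign of the conjugation is a convention the paper simply asserts.
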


\begin{proof}
Under the identification of $\LB_n$ as a subgroup of $\Aut(F_{n+1})$ we have that $t(a_u)$ is the automorphism of $F_{n+1}$ induced by the terminal homeomorphism $h$ of an ambient isotopy realizing the "thickened" motion of $a_u$, where the marked point is replaced by a circle. Thus, it suffices to compute the images of the meridians $x_1,\dots,x_{n+1}$ of $C\sqcup c_{n+1}$ under $h$, each joined to a base-point $*$ in the collar of $\partial\BB^3$ by a fixed path $\tau$. 

 Let $u\in\pi_1(M, *_1)\cong\Fn$ be represented by the loop $\gamma$ in $M$ based at $*_1$. The element $t(\alpha_u)$ in $\PLB_{n+1}$ is the loop braid whose last circle follows $\gamma$, while the other circles $C$ stay fixed. Thus, it is realized by an ambient isotopy supported in a neighborhood $N$ of $\gamma$, containing $c_{n+1}$, which is disjoint form $C$. Hence, $h$ is the identity outside $N$.

 We represent the meridians $x_1,\dots,x_n$ by the loops $m_1,\dots,m_n$ based at $*$. The loops $\gamma$ and $m_1,\dots, m_n$ have different base-points and thus we can assume that they are disjoint and moreover we can assume that they are not inside $N$. Therefore, $x_1,\dots,x_n$ are fixed by $h$ and thus $t(a_u)(x_i)=x_i$, for all $1\leq i\leq n$.

 We represent the meridian $x_{n+1}$ by the loop $\tau m_{n+1}\bar{\tau}$, where $m_{n+1}$ is a small meridian of $c_{n+1}$. The isotopy returns $m_{n+1}$ to a meridian of $c_{n+1}$ but it drags the path $\tau$ along $\gamma$ and so $x_{n+1}$ is sent to $u^{-1}x_{n+1}u$. That is, $h(x_{n+1})=u^{-1}x_{n+1}u$. 

 Hence, we have $t(a_u)=\alpha_u$. Moreover, $a_u\in U, \alpha_u\in U_{n+1}$ and since by Lemma \ref{map g} the map that takes $u\in F_n$ to $\alpha_u\in U_{n+1}$ is an isomorphism, it follows that $U$ and $U_{n+1}$ are isomorphic via $t$.
\end{proof}

We are ready now to state and prove the result concerning the short exact sequence involving the locally trivial fibration proved in Theorem \ref{fibration}.

\begin{proposition}\label{ses}
    For every $k\geq 1$ we have the following short exact sequence:

    $$1\longrightarrow F_n^k\overset{\iota_*}{\longrightarrow}\CM_{n,k}\overset{p_*}{\longrightarrow}\LB_n\longrightarrow1.$$
    The short exact sequence splits, under the section $s_*$, and thus we have $\CM_{n,k}\cong F_n^k\rtimes\LB_n$, where $\LB_n$ acts on $F_n^k$ in the following way: for $\beta\in\LB_n$, whose induced element in $\Aut(F_n)$ is denoted by $\phi_\beta$, and for $(u_1,\dots,u_k)\in F_n^k$ we have $$s_*(\beta)^{-1}(u_1,\dots,u_k)s_*(\beta)=(\phi_\beta(u_1), \dots, \phi_\beta(u_k)).$$

    In particular, for $k=1$ we have $$\CM_{n,1}\cong U\rtimes\LB_n,\ \text{where}\ U\cong F_n.$$

\end{proposition}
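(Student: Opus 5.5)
We apply the long exact sequence in homotopy to the locally trivial fibration $p\colon\MM_{n,k}\to\R_n$ of Theorem \ref{fibration}, with fiber $F=\Conf_k(M)$ over the base configuration $C$. The relevant portion is
$$\pi_2(\R_n)\longrightarrow\pi_1(F)\overset{\iota_*}{\longrightarrow}\CM_{n,k}\overset{p_*}{\longrightarrow}\LB_n\longrightarrow\pi_0(F).$$
By Lemma \ref{free} we have $\pi_1(F)\cong F_n^k$.

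Surjectivity of $p_*$ is immediate from the section: Remark \ref{section} gives $p\circ s=\id$, hence $p_*\circ s_*=\id_{\LB_n}$. This also shows the sequence splits once exactness is established.

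Injectivity of $\iota_*$ is the step needing an argument, since it amounts to showing the boundary map $\partial\colon\pi_2(\R_n)\to\pi_1(F)$ is trivial. There are two routes.

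\emph{Asphericity.} Use the known fact (Brendle--Hatcher) that $\R_n$ is aspherical, so $\pi_2(\R_n)=0$.

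\emph{Via the section.} A self-contained route avoids citing asphericity. Composing $s$ with the fiber-preserving structure lets us retract, at the level of fundamental groups, $\pi_1(\MM_{n,k})\to\pi_1(F)$. Concretely, any element of $\ker\iota_*$ is a loop of points in $M$, with circles fixed, that bounds a disc in $\MM_{n,k}$. We would like to use the trivializations $\Phi_c$ to transport that disc back to the fixed configuration $C$, but this works only locally. I would therefore rely on the Brendle--Hatcher asphericity (their result that $\R_n\simeq$ a $K(\pi,1)$), and expect this to be the main point to justify carefully.

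Exactness then gives the short exact sequence, and the section $s_*$ yields $\CM_{n,k}\cong F_n^k\rtimes\LB_n$.

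For the action, take $\beta\in\LB_n$ realized by an isotopy $h_t$ from $\id$ to $f$, identity on the collar. Then $s_*(\beta)$ is the loop $(h_t(C),*_1,\dots,*_k)$. Take $(u_1,\dots,u_k)$ represented by loops $\gamma_j$ of the $j$-th point in $M$ based at $*_j$, with circles fixed at $C$. The conjugate $s_*(\beta)^{-1}(u_1,\dots,u_k)s_*(\beta)$ lies in the fiber subgroup, and we identify it by a homotopy. Using the one-parameter family of ambient maps $h_t$, slide the loops $\gamma_j$ along, replacing them by $h_t\circ\gamma_j$. This is a homotopy in $\MM_{n,k}$ between the concatenation (go along $\beta$ backwards, do $\gamma_j$ at $C$, return along $\beta$) and the fiber loop $f\circ\gamma_j$ at the configuration $C$. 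The base points are unaffected because $h_t$ fixes the collar. The orientation conventions, namely left conjugation as fixed in Section \ref{pre} and the order of path composition, must be matched so that the result is $\phi_\beta(u_j)$ rather than $\phi_\beta^{-1}(u_j)$. Since $f_*=\phi_\beta$ on $\pi_1(M,*_1)$, we get the stated formula componentwise.

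For $k=1$, we have $U=\iota_*(F_n)\cong F_n$ by injectivity, giving $\CM_{n,1}\cong U\rtimes\LB_n$.
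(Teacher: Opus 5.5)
\textbf{Gap: the injectivity of $\iota_*$.} The fact you invoke is false: $\R_n$ is not aspherical. It is a manifold of dimension $4n$, so its cohomology with any local coefficients vanishes above degree $4n$. A $K(\LB_n,1)$ structure would therefore give $\LB_n$ finite cohomological dimension and force it to be torsion-free. But for $n\ge 2$ we have $\rho_i^2=1$ with $\rho_i\neq 1$, since $\rho_i$ maps to a transposition in $S_n$. The paper itself points this out and deliberately makes no claim about $\pi_2(\R_n)$. Your alternative disc-transport route is left unfinished, as you say yourself. Your remark that the section gives a retraction onto $\pi_1(F)$ is also not right: at the level of $\pi_1$, a section only splits $p_*$. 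So as written, exactness at $\pi_1(F)$ is not established, and the short exact sequence does not follow.

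\textbf{The missing idea.} The section does the job, but at the level of $\pi_2$. Since $s(C)=(C,*_1,\dots,*_k)$ is the base point and $p\circ s=\id$, we get $p_*\circ s_*=\id$ on $\pi_2(\R_n,C)$. Hence $p_*\colon\pi_2(\MM_{n,k})\to\pi_2(\R_n)$ is onto. Exactness at $\pi_2(\R_n)$ then forces $\partial=0$, so $\iota_*$ is injective for every $k$ at once, with no information about $\pi_2(\R_n)$ needed.

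\textbf{Comparison with the paper.} The paper proves injectivity differently. For $k=1$ it composes $\iota_*$ with the thickening homomorphism $t$ and uses $t(\iota_*(u))=\alpha_u\in\Aut(F_{n+1})$. Here $\alpha_u$ determines $u$, because the centralizer of $x_{n+1}$ in $F_{n+1}$ is $\langle x_{n+1}\rangle$. For $k\ge 2$ it reduces to $k=1$ via the maps $q_i$ that forget all marked points but the $i$th. Both arguments are valid. The section argument is shorter and uniform in $k$. The paper's argument additionally yields the identification $t(U)=U_{n+1}$, which it reuses in Theorem~\ref{thm2}.

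The rest of your proposal agrees with the paper: surjectivity and splitting via $s$, and the computation of the action by sliding the loops along $h_t$. The latter is exactly the paper's square $\Sigma(s,t)$, and with left-to-right path composition it gives $\phi_\beta(u_j)$ as stated.
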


\begin{proof}
    By Theorem \ref{fibration} we have the locally trivial fibration $p:\MM_{n,k}\to \R_n$ with fiber $\Conf_k(M)$, from which we deduce the following long exact sequence:
    $$\dots\longrightarrow\pi_2(\R_n)\overset{\partial}{\longrightarrow}\pi_1(\Conf_k(M))\overset{\iota_*}{\longrightarrow}\pi_1(\MM_{n,k})\overset{p_*}{\longrightarrow}\pi_1(\R_n)\longrightarrow\pi_0(\Conf_k(M))\longrightarrow\dots.$$
We know that $\pi_1(\Conf_k(M))\cong F_n^k$ by Lemma \ref{free}, $\pi_1(\MM_{n,k})=\CM_{n,k}$ and $\pi_1(\R_n)=\LB_n$. The fiber $\Conf_k(M)$ is path connected and thus $\pi_0(\Conf_k(M))=1$, which implies also that $p_*$ is surjective. Therefore, for the existence of the short exact sequence it remains to prove the injectivity of $\iota_*$, since the exactness at $\CM_{n,k}$ follows from the exactness of the long exact sequence. 

In order to prove injectivity of $\iota_*$ we will not make any claim about $\pi_2(\R_n)$, since $\R_n$ it is not aspherical because $\pi_1(\R_n)=\LB_n$ has torsion elements. Instead, we will simply prove injectivity of $\iota_*$ by proving injectivity of the map $t\circ\iota_*$, where $t$ is the map defined in Lemma \ref{thickening}. Let $k=1$ and $u,v\in F_n$. By Lemma \ref{correspondance} we have $t(\iota_*(u))=\alpha_u\in \LB_{n,1}\subset \Aut(F_{n+1})$ and $t(\iota_*(v))=\alpha_v\in \LB_{n,1}\subset \Aut(F_{n+1})$. If $\alpha_u=\alpha_v$ then $u^{-1}x_{n+1}u=v^{-1}x_{n+1}v$, which means that $vu^{-1}$ is in the centralizer of $x_{n+1}$ in $F_{n+1}$, which is the cyclic subgroup generated by $\langle x_{n+1}\rangle$ and thus $u=v$. Now let $k\geq 2$. For $1\leq i\leq k$ consider the map $q_i:\MM_{n,k}\to \MM_{n,1}$ that "forgets" all marked points but the $i^{th}$ one. This map is well defined and continuous because the marked points are ordered. Thus, it induces the map $(q_i)_*: \CM_{n,k}\to \CM_{n,1}$. Hence, we obtain $(q_i)_*\circ\iota_*=\iota_*^1\circ \mathrm{pr}_i$, where $\mathrm{pr}_i: F_n^k\to F_n$ is the projection to the $i^{th}$ coordinate and $\iota_*^1:F_n\to\CM_{n,1}$. Therefore, for $(u_1,\dots,u_k)\in \ker\iota_*$ then $\iota_*(u_i)=1$ for every $1\leq i\leq k$, which implies that $u_i=1$ by the case $k=1$. Thus, $\iota_*$ is injective.

Finally it follows that we indeed have the short exact sequence  $$1\longrightarrow F_n^k\overset{\iota_*}{\longrightarrow}\CM_{n,k}\overset{p_*}{\longrightarrow}\LB_n\longrightarrow1,$$
which splits since the map $s$ defined in Remark \ref{section} is a section of $p$. As a result, the homomorphism $p_*$ has as section the map $s_*:\LB_n\to\CM_{n,k}$, induced by $s$, as section and $\CM_{n,k}\cong \iota_*(F_n^k)\rtimes s_*(\LB_n),\ \text{where}\ \iota_*(F_n^k)\cong F_n^k\ \text{and}\ s_*(\LB_n)\cong \LB_n.$

It remains to show the action of $\LB_n$ on $F_n^k$. Let $\beta\in \LB_n$ be realized by $f$ with an isotopy $h_t$ from $\id$ to $f$ and let $\phi_\beta$ be the induced element in $\Aut(F_n)$. Moreover, we fix $i$ and we let $\alpha_u^i$ to be the fiber element in which the circles stay fixed, the $i^{th}$ marked point traverses a loop $l$ that represents $u$, while the other marked points are fixed. We consider $\Sigma(s,t):=(h_t(C),*_1,\dots,h_t(l(s)),\dots,*_k)$, which is an element in $\MM_{n,k}$, for all $s,t\in[0,1]$. Using the fact that $h_0=\id$, $h_1=f$ and $f$ fixes the marked points, we have for $s=0$ and $s=1$ the loop $s_*(\beta)$ and the loops $\alpha_u^i$, for $t=0$, and $\alpha_{\phi_\beta(u)}^i$, for $t=1$, which correspond to the fiber. Therefore we obtain 
\begin{equation}\label{eq}
    s_*(\beta)\alpha_{\phi_\beta(u)}^i=\alpha_{u}^is_*(\beta),
\end{equation}
simply by the fact that the boundary paths of the square from $(0,0)$ to $(1,1)$ are homotopic relative to their fixed endpoints. Thus, $s_*(\beta)^{-1}\alpha_u^is_*(\beta)=\alpha_{\phi(u)}^i$. The automorphism $\phi$ is the same for each $i$, and as a result we have that $s_*(\beta)^{-1}(u_1,\dots,u_k)s_*(\beta)=(\phi_\beta (u_1),\dots,\phi_\beta (u_k))$
\end{proof}

\section{Results on Nielsen fixed point classes}\label{main}

In this section we prove our two main results on Nielsen fixed point classes using all the theory we developed in Section \ref{mix}.
Throughout, let $f\in\Homeo(\BB^3, C)$ that fixes the boundary pointwise with $h_t$ an isotopy from $f$ to $\id$, from which we obtain $\beta\in \LB_n$ with induced $\phi_\beta\in\Aut(F_n)$. We want to compare fixed points of $f$ and so we set $k=1$ and we work in $\CM_{n,1}$, where $\CM_{n,1}\cong U\rtimes\LB_n$, as proved in Proposition \ref{ses}. 

We present below the notion of an extension of a loop braid $\beta\in \LB_n$ by an element $u\in F_n\cong\pi_1(M)\cong\iota_*(F_n)=U$. The element $u$ actually corresponds to a fixed point of $f$ and represents the path that it traces before it gets back to its position under the isotopy induced by $\beta$. Thus, we start with a loop braid $\beta\in \LB_n$ and we obtain an extension of it which is $\beta$ together with the loop that a fixed point traces in $M$. 

\begin{definition}
 For $u\in F_n$ we define the $u$-\textit{extension} of $\beta$ to be the element $\beta_u:=s_*(\beta)\alpha_u$ in $\CM_{n,1}$.
\end{definition}

In the following theorem we give a criterion for two elements $u,w \in F_n$ to be $\phi_\beta$-conjugate. That is, there exists an element $v\in F_n$ such that $w=\phi_\beta(v)uv^{-1}$.

\begin{theorem}\label{thm1}
    Let $u,w\in F_n$. Then, $u,w$ are $\phi_\beta$-conjugate if and only if $\beta_u$ and $\beta_w$ are conjugate by an element of $U$. 
\end{theorem}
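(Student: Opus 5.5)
The plan is to compute directly how conjugation by an element of $U$ acts on a $u$-extension, using the semidirect product structure $\CM_{n,1}\cong U\rtimes\LB_n$ from Proposition \ref{ses}. Throughout I read $\alpha_u$ in the definition of $\beta_u$ as the fiber element $a_u=\iota_*(u)\in U$. Every element of $U$ has the form $a_v$ for a unique $v\in F_n$. The two facts I will use are:
\begin{itemize}
\item[(a)] the map $v\mapsto a_v$ is an injective homomorphism, which is the injectivity of $\iota_*$ from Proposition \ref{ses};
\item[(b)] the commutation relation $a_v\,s_*(\beta)=s_*(\beta)\,a_{\phi_\beta(v)}$, which is equation \eqref{eq} with $k=1$, equivalently $s_*(\beta)^{-1}a_v s_*(\beta)=a_{\phi_\beta(v)}$.
\end{itemize}

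The key step is the identity
$$a_v\,\beta_u\,a_v^{-1}=a_v\,s_*(\beta)\,a_u\,a_{v^{-1}}=s_*(\beta)\,a_{\phi_\beta(v)}\,a_u\,a_{v^{-1}}=s_*(\beta)\,a_{\phi_\beta(v)uv^{-1}}=\beta_{\phi_\beta(v)uv^{-1}},$$
which holds for all $u,v\in F_n$. It uses (b) to move $a_v$ past $s_*(\beta)$, and then (a) to combine the three fiber elements into one.

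For the forward direction, suppose $w=\phi_\beta(v)uv^{-1}$. Then the identity gives $a_v\beta_u a_v^{-1}=\beta_w$ directly, so $\beta_u$ and $\beta_w$ are conjugate by an element of $U$.

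For the converse, suppose $g\beta_u g^{-1}=\beta_w$ for some $g\in U$. Whether the conjugation is written on the left or the right does not matter, since $U$ is a subgroup. Write $g=a_v$. The identity gives $\beta_{\phi_\beta(v)uv^{-1}}=\beta_w$, that is, $s_*(\beta)\,a_{\phi_\beta(v)uv^{-1}}=s_*(\beta)\,a_w$. Cancelling $s_*(\beta)$ on the left and applying the injectivity in (a) yields $w=\phi_\beta(v)uv^{-1}$.

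The only point needing care is the direction of the action in (b). Proposition \ref{ses} states it as $s_*(\beta)^{-1}a_u s_*(\beta)=a_{\phi_\beta(u)}$, and I must make sure this is consistent with the convention $w=\phi_\beta(v)uv^{-1}$ for $\phi_\beta$-conjugacy. If the convention turned out reversed, I would conjugate by $a_v^{-1}$ instead, or replace $v$ by $v^{-1}$; this gives the same equivalence relation, so the statement is unaffected. Apart from this bookkeeping the argument is purely algebraic.
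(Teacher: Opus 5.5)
Your proof is correct and is essentially the paper's argument. It uses the same identity $a_v\beta_u a_v^{-1}=\beta_{\phi_\beta(v)uv^{-1}}$, derived from \eqref{eq} and the homomorphism property and read in both directions, and you are more careful than the paper in justifying the combining and cancelling steps in $\CM_{n,1}$ via $\iota_*$ being an injective homomorphism (the paper cites the lemma on $g$, which concerns $\alpha_u\in\PLB_{n+1}$, and uses injectivity tacitly).

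Your closing hedge is wrong and should be dropped. If \eqref{eq} held with $\phi_\beta^{-1}$ in place of $\phi_\beta$, you would get $\phi_\beta^{-1}$-twisted conjugacy, which in general is a different relation; for instance, for $\phi$ inner by $x_1$, the element $x_1x_2x_1^{-1}x_2^{-1}$ is $\phi$-conjugate to $1$ but not $\phi^{-1}$-conjugate to $1$. Replacing $v$ by $v^{-1}$ cannot repair this, so the remark is harmless only because your (b) is exactly \eqref{eq}.
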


\begin{proof} Let $u,v\in F_n$.
    From \eqref{eq} we have that $\alpha_vs_*(\beta)=s_*(\beta)\alpha_{\phi_\beta(v)}$ and by definition we have $\beta_u:=s_*(\beta)\alpha_u$. Thus, $\alpha_v\beta_u\alpha_v^{-1}=\alpha_vs_*(\beta)\alpha_u\alpha_v^{-1}=s_*(\beta)\alpha_{\phi_\beta(v)}\alpha_u\alpha_v^{-1}=s_*(\beta)\alpha_{\phi_\beta(v)}\alpha_{uv^{-1}}=s_*(\beta)\alpha_{\phi_\beta(v)uv^{-1}}$. The last two equalities follow from Lemma \ref{map g}. Therefore, $\alpha_v\beta_u\alpha_v^{-1}=\beta_{\phi_\beta(v)uv^{-1}}$. Hence, if $w$ and $u$ are $\phi_\beta$-conjugate, that is, there exists an element $v\in F_n$ such that $w=\phi_\beta(v)uv^{-1}$, then $\beta_w=\alpha_v\beta_u\alpha_v^{-1}$. Thus, $\beta_w$ and $\beta_u$ are conjugate by an element in $U$. Conversely, if $\beta_w=\alpha_v\beta_u\alpha_v^{-1}$ then $\beta_w=\beta_{\phi_\beta(v)uv^{-1}}$ and thus $w=\phi_\beta(v)uv^{-1}$.
\end{proof}

This theorem tells us that two fixed points of $f$ are "similar", in the Nielsen sense, if and only if the corresponding closed paths that they trace under the isotopy induce two different loop braid extensions of $\beta$ that are actually conjugate by an element in $U$.

We recall that $U\subset\CM_{n,1}$, which contains motions of the marked point, that $U_{n+1}\subset\LB_{n,1}\subset\Aut(F_{n+1})$, which contains motions of the last circle and that $U\cong U_{n+1}$ via the map $t$ that thickens the marked point to a circle.  

\begin{remark}
It is important to note that in the classical case of fixed points of homeomorphisms of the punctured disk in \cite{Guaschi}, Guaschi proves that the corresponding subgroup $U_{n+1}$ is the kernel of the map that forgets the last strand. In our case we can not replace the group $U\cong U_{n+1}$ by the kernel of the map that forgets the last circle, that is, $\LB_{n,1}\to \LB_n$ because the statement would be simply false. Let $q:\LB_{n,1}\to \LB_n\subset\Aut(F_{n})$ be the forgetful map, that forgets the last circle, which algebraically is induced by killing the $x_{n+1}$ generator. It holds that $U_{n+1}\subsetneq\ker q$, since for example $\alpha_{i,n+1}\in \ker q \setminus U_{n+1}$, for all $i\leq n$, which is the element whose $i^{th}$ circle passes through the last $(n+1)^{th}$ circle and goes back to its position. Indeed, $\alpha_{i,n+1}\notin U_{n+1}$ because every element of $U_{n+1}$ fixes $x_i$, for all $1\leq i\leq n$, whereas $\alpha_{i,n+1}(x_i)=x_{n+1}^{-1}x_ix_{n+1}\neq x_i$. It is also interesting to note that $t(\CM_{n,1})\subsetneq \LB_{n,1}$ and that the difference between $\LB_{n,1}$ and $t(\CM_{n,1})$ is that in $\LB_{n,1}$ we have elements where circles different than the last one pass through the last circle which can not happen for elements in $t(\CM_{n,1})$, since they are obtained from elements in $\CM_{n,1}$, where the last component is a point and not a circle. This is what forces Theorem \ref{thm1} to be stated with $U_{n+1}$ rather than $\ker q$.
\end{remark}

We will provide now a characterization of fixed points $x\in\Fix(f)\cap M$ that are Nielsen equivalent in $M$ from an algebraic point of view involving loop braid elements.

\begin{definition}
For $x\in\Fix(f)\cap M$ choose a path $\lambda_x$ from $*_1$ to $x$ in $M$ and let $\Lambda_x$ be the path $s\to(C,\lambda_x(s))$ in $\MM_{n,1}$ from $(C,*_1)$ to $(C,x)$. Let $\hat{l}_{x}$ be the mixed loop $(h_t(C),h_t(x))$ based at $(C,x)$ and set $l_{x}:=\Lambda_x\hat{l}_x\bar{\Lambda}_x\in \CM_{n,1}$. Thus, $l_x\in \CM_{n,1}$ is the motion of the circles together with the marked point $x$ based at $(C,*_1)$ under $h_t$. 
\end{definition}

The following lemma connects the extended loop braids $\beta_u\in \CM_{n,1}$ with the coordinate of a fixed point of $f$.

\begin{lemma}\label{coordinate}
    We have $l_x=\beta_{R(x)}$, where $R(x)\in F_n$ is the coordinate of $x$. Moreover, by choosing a different path from $*_1$ to $x$, that is, replacing $\lambda_x$ by $\gamma\lambda_x$, we are replacing $l_x$ by $\alpha_\gamma l_x\alpha_\gamma^{-1}$.
\end{lemma}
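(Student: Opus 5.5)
The plan is to compute $l_x$ explicitly inside $\CM_{n,1}$ by a homotopy in $\MM_{n,1}$ that moves the marked point along the path $\lambda_x$, in the same spirit as the square $\Sigma(s,t)$ used to prove \eqref{eq} in Proposition \ref{ses}. Consider
$$\Sigma(s,t):=(h_t(C),h_t(\lambda_x(s))),\qquad s,t\in[0,1],$$
which lies in $\MM_{n,1}$ because $h_t$ is a homeomorphism and $h_t(\lambda_x(s))\notin h_t(C)$. The four edges of the square are as follows. The edge $s=0$ is the loop $t\mapsto(h_t(C),*_1)$, which is $s_*(\beta)$ because $*_1$ lies in the collar where every $h_t$ is the identity. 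The edge $s=1$ is $\hat l_x$. The edge $t=0$ is $\Lambda_x$. The edge $t=1$ is the path $s\mapsto(C,f(\lambda_x(s)))$, which is a fiber path from $(C,*_1)$ to $(C,x)$ since $f(C)=C$ and $f(x)=x$.

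Since the boundary of the square is null-homotopic, we obtain the relation $\Lambda_x\hat l_x\simeq s_*(\beta)\,(C,f\circ\lambda_x)$ relative to the endpoints. Hence
$$l_x=\Lambda_x\hat l_x\bar\Lambda_x\simeq s_*(\beta)\cdot\bigl[(C,(f\circ\lambda_x)\bar\lambda_x)\bigr].$$
The second factor is the fiber loop in which the circles stay fixed and the marked point traverses $(f\circ\lambda_x)\lambda_x^{-1}$. This loop represents $R(x)$ by definition (with constant base path, as fixed in Section \ref{pre}). So the factor equals $\iota_*(R(x))=\alpha_{R(x)}$, and therefore $l_x=s_*(\beta)\alpha_{R(x)}=\beta_{R(x)}$. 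The main point to check here is that the path-composition conventions match. Concatenation order in $\CM_{n,1}$ must agree with the convention in Lemma \ref{Nielsen}, where $R(x)=(f\circ\ell)\ell^{-1}$, and with the definition $\beta_u=s_*(\beta)\alpha_u$. I would write the boundary relation carefully, as was done for \eqref{eq}, so that the factor $s_*(\beta)$ comes first.

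For the second statement, replacing $\lambda_x$ by $\gamma\lambda_x$ changes the coordinate to $\phi_\beta(\gamma)R(x)\gamma^{-1}$ by Lemma \ref{Nielsen}(i), where $f_\pi=\phi_\beta$. By the first part the new loop is $\beta_{\phi_\beta(\gamma)R(x)\gamma^{-1}}$. The computation in the proof of Theorem \ref{thm1} shows that this equals $\alpha_\gamma\beta_{R(x)}\alpha_\gamma^{-1}=\alpha_\gamma l_x\alpha_\gamma^{-1}$.

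As a check, the same conclusion can be reached directly. We have $\Lambda_{\gamma\lambda_x}=\alpha_\gamma\Lambda_x$ as paths, so conjugating $\hat l_x$ gives $\alpha_\gamma l_x\alpha_\gamma^{-1}$ immediately, and this is consistent with the algebraic computation.
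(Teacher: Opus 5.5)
Your proof is correct and follows the paper's argument: the same square $\Sigma(s,t)=(h_t(C),h_t(\lambda_x(s)))$ with the same edge identifications gives $l_x=s_*(\beta)\alpha_{(f\circ\lambda_x)\bar\lambda_x}=\beta_{R(x)}$, and the second claim follows, as in the paper, from Lemma~\ref{Nielsen}(i) together with the computation in the proof of Theorem~\ref{thm1}. Your extra direct check via $\Lambda_{\gamma\lambda_x}=\alpha_\gamma\Lambda_x$ is also valid, and is the more self-contained route to the second statement, since $\hat l_x$ does not depend on the choice of $\lambda_x$.
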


\begin{proof}
We will use the same argument as in the proof of Proposition \ref{ses}. Consider $$\Sigma(s,t):=(h_t(C), h_t(\lambda_x(s))),$$ which takes values in $\MM_{n,1}$, since $h_t$ is a homeomorphism and $\lambda_x(s)\notin C$. For $s=0$ and $s=1$ we get the loops $\hat{l}_{*_1}$ and $\hat{l}_x$ respectively, while for $t=0$ and $t=1$ we get the paths $\Lambda_x$ and $\Lambda_{f\circ \lambda_x}$ respectively, from $(C,*_1)$ to $(C,x)$. Thus, by homotopy relative to the endpoints of the square from $(0,0)$ to $(1,1)$ we get $\hat{l}_{*_1}\cdot \Lambda_{f\circ \lambda_x}=\Lambda_x\cdot \hat{l}_x$ and since $\hat{l}_{*_1}=s_*(\beta)$, we finally obtain $s_*(\beta)\cdot \Lambda_{f\circ \lambda_x}=\Lambda_x\cdot \hat{l}_x$. Therefore, $l_x=\Lambda_x\hat{l}_x\bar{\Lambda}_x=s_*(\beta)\cdot \Lambda_{f\circ \lambda_x}\bar{\Lambda}_x=s_*(\beta)\alpha_{(f\circ \lambda_x)\bar{\lambda}_x}=\beta_{R(x)}$. 

Moreover, by Lemma \ref{Nielsen}, we see that replacing $\lambda_x$ by $\gamma\lambda_x$, we replace $R(x)$ by $\phi_\beta(\gamma)R(x)\gamma^{-1}$, which in turn tells us, by Theorem \ref{thm1}, that we replace $\beta_{R(x)}$ by $\alpha_\gamma\beta_{R(x)}\alpha_\gamma^{-1}$.
    
\end{proof}

\begin{theorem}\label{thm2}
Let $x,y\in\Fix(f)\cap M$. The following are equivalent:
\begin{enumerate}
\item[(i)] the fixed points $x$ and $y$ are Nielsen equivalent in M, 
\item[(ii)] the coordinates $R(x)$ and R(y) are $\phi_\beta$-conjugate in $F_n$,
\item[(iii)] the loops $l_x$ and $l_y$ are conjugate in $\CM_{n,1}$ by an element of $U$,
\item[(iv)] the loop braids $t(l_x)$ and $t(l_y)$ are conjugate in $\LB_{n,1}$ by an element of $U_{n+1}$.
\end{enumerate}
  
\end{theorem}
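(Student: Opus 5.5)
We prove the cycle of equivalences (i)$\Leftrightarrow$(ii)$\Leftrightarrow$(iii)$\Leftrightarrow$(iv), using the machinery already established.

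For (i)$\Leftrightarrow$(ii) we invoke Lemma~\ref{Nielsen}(ii) with $X=M$ and base-point $\ast_1$. Since $f$ is the identity on the collar, $\ast_1$ is a fixed point. We take the base path to be constant, so that $f_\pi=f_*=\phi_\beta$ on $\pi_1(M,\ast_1)\cong F_n$. Reidemeister equivalence of $R(x)$ and $R(y)$ is then exactly $\phi_\beta$-conjugacy in $F_n$. One point needs a remark: $M$ is not compact, but the argument of Lemma~\ref{Nielsen} uses only paths and homotopies, so it applies verbatim.

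For (ii)$\Leftrightarrow$(iii), Lemma~\ref{coordinate} gives $l_x=\beta_{R(x)}$ and $l_y=\beta_{R(y)}$ once paths $\lambda_x,\lambda_y$ are chosen. Theorem~\ref{thm1} then says that $R(x)$ and $R(y)$ are $\phi_\beta$-conjugate if and only if $\beta_{R(x)}$ and $\beta_{R(y)}$ are conjugate by an element $\alpha_v$ of $U$. The second part of Lemma~\ref{coordinate} shows that changing $\lambda_x$ replaces $l_x$ by a $U$-conjugate. Hence the condition in (iii) does not depend on the choice of paths, and the equivalence holds.

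For (iii)$\Rightarrow$(iv), we apply the homomorphism $t$ of Lemma~\ref{thickening}. If $l_y=a\,l_x\,a^{-1}$ with $a\in U$, then $t(l_y)=t(a)\,t(l_x)\,t(a)^{-1}$, and $t(a)\in U_{n+1}$ by Lemma~\ref{correspondance}.

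The converse (iv)$\Rightarrow$(iii) is the main obstacle, because $t$ is not known to be injective on all of $\CM_{n,1}$. We plan to work inside $\Aut(F_{n+1})$ and compute $t$ on the semidirect product decomposition of Proposition~\ref{ses}. Write $t(l_x)=t(s_*(\beta))\,\alpha_{R(x)}$. The element $t(s_*(\beta))$ is the loop braid in which the last circle sits still in the collar; it should equal the canonical extension $\bar\phi_\beta$, and we justify this by the same meridian computation as in Lemma~\ref{correspondance}. Now suppose $\alpha_v\,\bar\phi_\beta\,\alpha_u\,\alpha_v^{-1}=\bar\phi_\beta\,\alpha_w$ with $u=R(x)$ and $w=R(y)$. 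By Lemma~\ref{map g} this rewrites as
$$\bar\phi_\beta\,\alpha_{\phi_\beta(v)uv^{-1}}=\bar\phi_\beta\,\alpha_w.$$
The conjugation relation needed here is $\alpha_v\bar\phi=\bar\phi\alpha_{\phi(v)}$, and we must check its orientation against the convention in Lemma~\ref{map g}, adjusting $\phi$ to $\phi^{-1}$ if necessary. Cancelling $\bar\phi_\beta$ and applying the injectivity of $g$ from Lemma~\ref{map g} gives $w=\phi_\beta(v)uv^{-1}$. This is (ii), and by the previous step it is equivalent to (iii), which closes the cycle.
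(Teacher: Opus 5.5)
Your arguments for (i)$\Leftrightarrow$(ii), (ii)$\Leftrightarrow$(iii) and (iii)$\Rightarrow$(iv) match the paper's. For (iv)$\Rightarrow$(iii) the paper needs no computation in $\Aut(F_{n+1})$, because $t$ is injective on all of $\CM_{n,1}$:
\begin{itemize}
\item if $t(m)=1$, then $p_*(m)=q(t(m))=1$ by Lemma~\ref{thickening};
\item hence $m\in\ker p_*=U$ by Proposition~\ref{ses};
\item and $t$ is injective on $U$ by Lemma~\ref{correspondance}.
\end{itemize}
So the step you call the main obstacle takes two lines. It also gives the stronger fact that $t$ embeds $\CM_{n,1}$ into $\LB_{n,1}$.

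Your coset computation is a workable alternative. It needs only injectivity of $t$ on $U$, not exactness at $\CM_{n,1}$. However, the orientation question you left open is a real issue, and settling it via Lemma~\ref{map g} goes the wrong way. That lemma says $\bar\phi\,\alpha_u\,\bar\phi^{-1}=\alpha_{\phi(u)}$, i.e.\ $\alpha_v\bar\phi=\bar\phi\,\alpha_{\phi^{-1}(v)}$. Substituting this gives $w=\phi_\beta^{-1}(v)\,u\,v^{-1}$. That is $\phi_\beta^{-1}$-twisted conjugacy (equivalently, $\phi_\beta$-conjugacy of $u^{-1}$ and $w^{-1}$), which differs from (ii) in general. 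So ``adjusting $\phi$ to $\phi^{-1}$'' would break the cycle rather than close it.

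The mismatch comes from an unstated convention: whether identifying $\LB_{n+1}$ with a subgroup of $\Aut(F_{n+1})$ preserves or reverses the order of products. You can avoid it by staying inside $\LB_{n,1}$ with its own group law:
\begin{itemize}
\item Set $B:=t(s_*(\beta))$. You never need $B=\bar\phi_\beta$.
\item Apply the homomorphism $t$ to \eqref{eq} to get $\alpha_vB=B\,\alpha_{\phi_\beta(v)}$.
\item Use that $v\mapsto t(a_v)=\alpha_v$ is the homomorphism $t\circ\iota_*$.
\end{itemize}
Then $\alpha_v\,t(l_x)\,\alpha_v^{-1}=B\,\alpha_{\phi_\beta(v)uv^{-1}}$, while $t(l_y)=B\,\alpha_w$. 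Cancelling $B$ and using injectivity of $t$ on $U$ gives $w=\phi_\beta(v)uv^{-1}$. With this fix, your argument is Theorem~\ref{thm1} transported by $t$ — a special case of the paper's global injectivity argument.
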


\begin{proof}
   The equivalence between (i) and (ii) follows from Lemma \ref{Nielsen}, which is a well known fact. 
   
   The equivalence between (ii) and (iii) follows from Lemma \ref{coordinate} together with Theorem \ref{thm1}.
   
   Suppose that (iii) holds, that is, $l_y=\alpha l_x \alpha^{-1}$, with $\alpha\in U$. Then, $t(l_y)=t(\alpha)t(l_x)t(\alpha)^{-1}$, with $t(\alpha)\in U_{n+1}$, since $t$ is a homomorphism and $t(U)=U_{n+1}$, by Lemma \ref{thickening} and Lemma \ref{correspondance}. Thus, (iii) implies (iv). Conversely, suppose that (iv) holds. That is, $t(l_y)=\bar{\alpha}t(l_x)\bar{\alpha}^{-1}$, where $\bar{\alpha}\in U_{n+1}$. By Lemma \ref{correspondance}, there exists an element $\alpha\in U$ such that $t(\alpha)=\bar{\alpha}$. Thus, $t(l_y)=t(\alpha l_x\alpha^{-1})$ and it suffices to show that the homomorphism $t$ is injective. This is indeed the case and it follows from Lemma \ref{thickening} and Proposition \ref{ses}. If $t(m)=1$, for $m\in\CM_{n,1}$, then $p_*(m)=q(t(m))=1$, where $q:\LB_{n,1}\to \LB_n$ and so $m\in\ker p_*=U$. We know that the map $t$ is already injective on $U$ by Lemma \ref{correspondance}. Thus, $m=1$, which implies that $t$ is injective and therefore $l_y=\alpha l_x\alpha^{-1}$. Therefore, (iv) implies (iii). 
\end{proof}

\begin{remark}
Theorem \ref{thm2} asserts the existence of a conjugating element but we note that one can be written down explicitly. Indeed, let $\ell$ be a path from $x$ to $y$ with $\ell$ homotopic to $f\circ\ell$ relative to the endpoints,
realizing the Nielsen equivalence, and set $w:=\lambda_x\,\ell\,\bar\lambda_y\in\Fn$, where $\lambda_x$ is a path from $*_1$ to $x$ and $\lambda_y$ is a path from $*_1$ to $y$. Computing
the coordinate of $y$ with the path $\lambda_x\ell$ instead of $\lambda_y$ gives $R(x)$. Moreover, since the
two paths differ by $w$, Lemma \ref{Nielsen} gives $R(x)=\phi_\beta(w)R(y)w^{-1}$. Applying Theorem \ref{thm1} we get $\beta_{R(x)}=\alpha_w\beta_{R(y)}\alpha_{w}^{-1}$.
Since $l_x=\beta_{R(x)}$ and $l_y=\beta_{R(y)}$, by Lemma \ref{coordinate}, it follows that
$l_x=\alpha_w\,l_y\,\alpha_w^{-1}$.
Thus the conjugating element is built directly from the Nielsen path $\ell$ and the two connecting
paths.
\end{remark}

\begin{remark}
We notice that Theorems \ref{thm1} and \ref{thm2} apply equally to the iterates of $f$. The loop braid of $f^k$
becomes $\beta^k$ and its induced automorphism $\phi_\beta^k$. In particular, each Nielsen class of
$\Fix(f^k)$ in $M$ carries a loop braid, up to $U$-conjugacy, and the points of a
periodic orbit of period $k$ are fixed points of $f^k$ to which the criterion applies.
\end{remark}

     \section{Discussion}\label{application}
In this section we will present several remarks concerning our setting and our results and we will conclude with an open question.

\subsection{Realization and linking numbers of fixed points}

We note that Theorem \ref{thm2} would be of limited use if few classes were realized. In fact, as shown below, all classes are realized prescribing a motion of a fixed point by an ambient isotopy. 

\begin{proposition}\label{realization}
    Let $\beta\in \LB_n$ and let $u_1,\dots,u_m\in F_n$. There exists $f\in \Homeo(\BB^3, C)$ that is the identity on the collar of $\partial\BB^3$ with $b_{f,C}=\beta$ and with fixed points $x_1,\dots,x_m\in M$, such that $R(x_i)=u_i$ for every $1\leq i \leq m.$
\end{proposition}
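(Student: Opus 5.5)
Start from any $f_0\in\Homeo(\BB^3,C)$ that is the identity on the collar of $\partial\BB^3$ and satisfies $b_{f_0,C}=\beta$; it exists because $\LB_n=\MCG(\BB^3,C)$. Fix an isotopy $h_t$ from $\id$ to $f_0$. Then modify $f_0$ by a homeomorphism supported in $M$, away from $C$ and the collar, so that the loop braid is unchanged and the prescribed fixed points appear. Lemma~\ref{coordinate} and Theorem~\ref{thm1} reduce the goal to producing fixed points $x_i$ with mixed motions $l_{x_i}=\beta_{u_i}=s_*(\beta)\alpha_{u_i}$ in $\CM_{n,1}$. Equivalently, each point must come back to itself and, after conjugation by the connecting path, trace the loop $u_i$ in the fibre relative to the section loop $s_*(\beta)$.

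\textbf{Step 1 (choosing points).} Pick distinct points $y_1,\dots,y_m\in M$ near the collar. Choose paths $\lambda_i$ from $*_1$ to $y_i$ and paths $\mu_i$ from $y_i$ to $f_0(y_i)$ in $M$ such that the loop $\lambda_i\,\mu_i\,\overline{(f_0\circ\lambda_i)}$ represents $u_i^{-1}$. This can be arranged by inserting a suitable word into $\mu_i$, since $\pi_1(M)\cong F_n$ and any element can be spliced in. By transversality in the $3$-manifold $M$ (paths are $1$-dimensional), take the $\mu_i$ embedded, pairwise disjoint, and disjoint from $C$ and from the collar; generic position allows this even when endpoints are close.

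\textbf{Step 2 (point-pushing).} Let $g$ be a homeomorphism supported in small disjoint tubular neighbourhoods of the arcs $\mu_i$, isotopic to the identity rel $C\cup\partial\BB^3$ by an isotopy $g_t$ supported there, with $g(f_0(y_i))=y_i$, i.e.\ pushing each point back along $\overline{\mu_i}$. Set $f:=g\circ f_0$ and use the isotopy obtained by concatenating $h_t$ with $g_t\circ f_0$. The circles do not move in the second stage, so $b_{f,C}=\beta$, and $f$ is still the identity on the collar. Each $x_i:=y_i$ is fixed.

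\textbf{Step 3 (computing coordinates).} Compute $R(x_i)=[(f\circ\lambda_i)\overline{\lambda_i}]$ using Lemma~\ref{Nielsen}. The map $f\circ\lambda_i$ differs from $f_0\circ\lambda_i$ by the point-pushing, so up to homotopy it equals $(f_0\circ\lambda_i)\,\overline{\mu_i}$, provided $\lambda_i$ avoids the support of $g$. Hence $R(x_i)=[(f_0\circ\lambda_i)\overline{\mu_i}\,\overline{\lambda_i}]$. The choice in Step 1 makes this equal to $u_i$, after adjusting $\mu_i$ by prepending the needed word. Alternatively, the same conclusion follows from the mixed-loop picture: by Lemma~\ref{coordinate}, $l_{x_i}$ is $s_*(\beta)$ followed by the fibre loop $\alpha_{u_i}$.

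\textbf{Main obstacle.} The delicate step is making the supports of the pushes disjoint from one another, from $C$, and from the paths $\lambda_j$, while still having complete freedom to choose the homotopy class of each $\mu_i$. I would handle this by first picking embedded representatives and then perturbing, which is valid in dimension $3$ because arcs generically do not meet. It must also be checked that no pushing disturbs another $x_j$. This holds since the $x_j$ lie off the other tubes, which suffices because the proposition only prescribes these fixed points and says nothing about any others.
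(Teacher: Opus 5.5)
Your proof is correct, but it takes a different route from the paper's. The paper never chooses a representative $f_0$ first. It takes $s_*(\beta)\alpha_{(u_1,\dots,u_m)}\in\CM_{n,m}\cong F_n^m\rtimes\LB_n$ and represents it by a loop in $\MM_{n,m}$ (circles plus $m$ marked points). It then extends this motion to an ambient isotopy $h_t$ by isotopy extension, sets $f:=h_1$, declares the marked points to be the $x_i$, and reads off $R(x_i)=u_i$ from $l_{x_i}=\beta_{u_i}$ via Lemma~\ref{coordinate}.

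You instead realize $\beta$ alone and then post-compose with point-pushes along arcs whose homotopy classes you prescribe. You compute $R(x_i)$ straight from the definition with one isotopy square, so the fibration, the semidirect product and Lemma~\ref{coordinate} are not needed. The paper's route is shorter given its machinery, treats all $m$ points in a single loop, and builds in the mixed motions $l_{x_i}=\beta_{u_i}$.

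Yours is the classical Nielsen-theoretic construction, and it is more robust on one point. Your isotopy fixes the collar and $*_1$ throughout, which is exactly what the constant base path and the identity $\hat l_{*_1}=s_*(\beta)$ in the proof of Lemma~\ref{coordinate} require. The paper takes $x_i:=*_i$, which are points of the collar that its extended isotopy must move whenever $u_i\neq 1$. Moreover, for $x_1=*_1$ the constant path shows that $R(x_1)$ is the trivial class, not in general that of $u_1$. Making the paper's argument rigorous means starting the marked points away from $*_1$ and the collar, which is what you do.

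Two small repairs to your write-up are needed.

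First, in Step~3 the proviso that $\lambda_i$ avoid the support of $g$ cannot hold literally: $y_i$, and $f_0(y_i)$ for $f_0\circ\lambda_i$, lie in the tube around $\mu_i$. It is also not needed. The square $(s,t)\mapsto g_t(f_0(\lambda_i(s)))$ has constant left side because $g_t(*_1)=*_1$, and its right side is the track $\overline{\mu_i}$. Hence $f\circ\lambda_i\simeq(f_0\circ\lambda_i)\overline{\mu_i}$ rel endpoints unconditionally. For the same reason, the disjointness from the $\lambda_j$ in your ``main obstacle'' is superfluous.

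Second, $f_0(y_i)=y_i$ may be unavoidable, for instance when $\beta=1$ and $f_0=\id$. Then $\mu_i$ is a loop rather than an embedded arc. You should either push around an embedded closed curve inside a solid-torus neighbourhood, or first perturb $f_0$ by a small isotopy supported away from $C$ and the collar.
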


\begin{proof}
    We consider the element $s_*(\beta)\alpha_{(u_1,\dots,u_m)}$ in $\CM_{n,m}\cong F_n^m\rtimes\LB_n$, where $\alpha_{(u_1,\dots,u_m)}$ is the element in $F_n^m$ that has the $i^{th}$ marked point traversing $u_i$ while the $n$ circles remain at their position. Note that $\alpha_{(u_1,\dots,u_m)}$ represents a loop $(c_t,y_1(t),\dots, y_m(t))$ based at $(C,*_1,\dots,*_m)$, which gives an isotopy of $C\cup\{*_1,\dots,*_m\}$ in $\Int(\BB^3)$. Thus, by the isotopy extension theorem it extends to an ambient isotopy $h_t$ of $\BB^3$, which is the identity on the collar of the boundary. We define $f:=h_1$ and thus $f(C)=c_1=C$, $b_{f,C}=\beta$ and $f(*_i)=y_i(1)=*_i$. Therefore, each $x_i:=*_i$ is a fixed point of $f$ in $M$. Moreover, the motion of $C$ together with $x_i$ determined by $h_t$ corresponds to the $u_i$-extension of $\beta$, that is, $\beta_{u_i}$. Thus, by Lemma \ref{coordinate}, $l_{x_i}=\beta_{u_i}$ and hence $R(x_i)=u_i$.
\end{proof}

In \cite{Makri}, the linking numbers of a fixed point $x$ with the components of $C$ are the data encoded by the abelianized generalized Lefschetz number. To be more precise, let $\mathrm{Ab}: F_n\to H_1(M)=\ZZ^n$ be the abelianization with $\mathrm{Ab}(x_i)=a_i$, then the coefficient of $a_i$ in $\mathrm{Ab}(R(x))$ equals $\lk(C_i,x)$. The exponents of the monomials that appear in the abelianized generalized Lefschetz number are obtained from the image of $\mathrm{Ab}(R(x))$ in $\mathrm{Coker}(f_*-\id)$, see Subsection $4.4$ in \cite{Makri}. 

\begin{remark}

By Theorem \ref{thm2} the $U$-conjugacy class of $l_x$ completely determines the Nielsen class
of $x$. The linking data extracted from the abelianized generalized Lefschetz number is not. That is, two
fixed points may have the same linking numbers with every sublink of $C$ and still lie in distinct
Nielsen classes, as the following example shows. 
\end{remark}

\begin{example}\label{example}
 Consider $\beta=1\in \LB_n$. Hence $\phi_\beta=\id$, which implies that the $\phi_\beta$-conjugacy is the standard conjugacy. Applying Proposition \ref{realization}, with $m=2$, $u_1=1\in F_n$ and $u_2=x_1x_2x_1^{-1}x_2^{-1}\in F_n$, we have a homeomorphism $f$ with two fixed points $x$ and $y$ such that $R(x)=1$ and $R(y)=x_1x_2x_1^{-1}x_2^{-1}$. Both coordinates $R(x), R(y)$ have trivial abelianization and thus all linking numbers of $x$ and $y$ vanish. While the fixed point $x,y$ have the same linking numbers we see that they are not Nielsen equivalent because $u_1=1$ and $u_2=x_1x_2x_1^{-1}x_2^{-1}\neq 1$ are not conjugate in $F_n$.
\end{example}

\subsection{Attaining the Nielsen number}
We know that the Nielsen number is a lower bound for the number of fixed points of any map in the homotopy class. We would like to know whether in our setting this lower bound is attained.

We recall that, given a map $f:X\to X$ of a compact
manifold $X$, the manifold $X$ is said to be \emph{Wecken} if $\MF(f)=\N(f)$ for all maps $f:X\to X$.

It is important to note that the present setting differs greatly from the two dimensional one. In \cite{Guaschi} the relevant space is $\mathbb{D}^2\setminus A$, where $A$ is an $n$-point set, and thus for $n\geq2$, we obtain a surface of negative Euler characteristic. We know from \cite{jiang1985} that such surfaces are non-Wecken, where Jiang proves that a surface is Wecken if and only if its Euler characteristic is non-negative. In dimension three no such obstruction occurs. In particular, we know that compact $3$-dimensional manifolds are Wecken. 

In our setting we work with homeomorphisms of the $3$-ball minus a trivial link of circles. Thus, applying Wecken's theorem \cite{Wecken}, the following results holds.

\begin{proposition} \label{wecken}
Let $\bar B$ be the compactification of $B=\BB^3\setminus C$ obtained by replacing each component of $C$ by the
boundary torus of a tubular neighborhood, and let $\bar f$ be the homeomorphism induced by $f$.
Then
$N(\bar f)=MF(\bar f).$
\end{proposition}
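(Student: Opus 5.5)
The plan is to check that $\bar B$ and $\bar f$ satisfy the hypotheses of Wecken's theorem and then apply it. In its classical form the theorem says that a compact triangulable manifold of dimension at least three, possibly with boundary, has $N(g)=\MF(g)$ for every self-map $g$. So we must (a) identify $\bar B$ as a compact $3$-manifold with boundary, (b) check that $f$ induces a continuous self-map $\bar f$ of $\bar B$, and (c) confirm that the version of Wecken's theorem we cite covers manifolds with boundary. Once these hold, the conclusion is just the statement of the theorem with $g=\bar f$.

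For (a), take a closed tubular neighbourhood $\nu(C)=\bigsqcup_i \nu(C_i)$ of $C$ in $\Int(\BB^3)$, each $\nu(C_i)$ a solid torus. Set $\bar B:=\BB^3\setminus \Int\,\nu(C)$. This is a compact, orientable, smooth (hence triangulable) $3$-manifold whose boundary is $\partial\BB^3\sqcup\bigsqcup_i \partial\nu(C_i)$. Its interior deformation retracts onto it, so $\bar B\simeq M$ and $\pi_1(\bar B)\cong\Fn$.

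For (b), $f$ is a homeomorphism with $f(C)=C$. Using the uniqueness of tubular neighbourhoods up to isotopy, I would isotope $f$ rel $C$ and rel the collar of $\partial\BB^3$ so that $f(\nu(C))=\nu(C)$. Its restriction then gives a homeomorphism $\bar f$ of $\bar B$. This restriction step is where I expect the only real care to be needed, since $f$ is merely a homeomorphism. I would handle it with topological tubular neighbourhood theory in dimension three (Moise, or the existence and uniqueness of normal bundles for tame knots). Alternatively, one can equivalently blow up $C$ and define $\bar f$ on the boundary tori through the action of $f$ on normal directions. Whichever construction is used, $\bar f$ is well defined up to homotopy, and $N$ and $\MF$ are homotopy invariants, so the choice does not affect the statement.

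For (c), $\dim\bar B=3\geq3$, and Wecken's theorem (see \cite{Jiang}) applies to compact PL manifolds with boundary. The usual proof first pushes fixed points off the boundary by a small homotopy using a collar. It then cancels the fixed points within each Nielsen class, and finally removes the inessential classes by a local index-zero argument. Both steps use dimension at least three. The main thing to verify is therefore only that the boundary version is the one being cited. Applying it to $\bar f$ gives $N(\bar f)=\MF(\bar f)$, which is the statement of Proposition \ref{wecken}.
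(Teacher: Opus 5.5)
Your proposal is correct and follows the paper's route: the paper's entire justification is that $\bar B$ is a compact $3$-manifold, so Wecken's theorem applies. You additionally make explicit that the version for manifolds with boundary is the one needed. Your step (b) is harmless but not needed for the equality itself, since Wecken's theorem gives $N(g)=\MF(g)$ for \emph{every} self-map $g$ of $\bar B$, whichever construction of $\bar f$ is used. If you keep step (b), keep the isotopy-to-preserve-$\nu(C)$ construction rather than the ``normal directions'' variant, which would require $f$ to be differentiable along $C$.
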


\begin{remark}
The trace formula, before passing to the abelianization, presented in \cite{Makri} computes the generalized Lefschetz number of $\bar f$, whose
nonzero coefficients correspond to the essential fixed point classes and hence give the Nielsen
number $N(\bar f)$. By Proposition \ref{wecken} this number is attained, which means that it is not just a lower
bound for the number of fixed points of a map homotopic to $\bar f$, but the exact minimum. This has
no analogue in the two-dimensional case. By Theorem~\ref{thm2}, each essential class is labeled by a $U_{n+1}$-conjugacy class of loop braids on $n+1$
strands.
\end{remark}

\subsection{Computability}
Let $G$ be a group and $\phi\in \Aut(G)$. We recall that two elements $u,v\in G$ are \emph{$\phi$-twisted} conjugate if there is an element $w\in G$ such that $v=\phi(w)uw^{-1}$. The $\phi$-twisted conjugacy problem in $G$ asks for an algorithm which, given $u,v\in G$ as finite words, decides whether $u$ and $v$ are $\phi$-twisted conjugate.

In our setting we work with automorphism $\phi_\beta$ in $F_n$. This automorphism, which is induced by $\beta\in\LB_n$, is given explicitly by the images
$\phi_\beta(x_i)$ of the generators of $\Fn$. By Theorem \ref{thm1} the elements $\beta_u$ and $\beta_v$ are $U$-conjugate if and only if $u$ and $v$ are $\phi_\beta$-conjugate. Combining our result with the result given in \cite{BMMV}, where they prove that the twisted conjugacy problem in finitely generated free groups is solvable, that is, there is an algorithm for the twisted conjugacy decision problem, the following result holds.

\begin{proposition}
    Given $\beta\in \LB_n$ and given $u,v\in F_n$ it is decidable whether $\beta_u$ and $\beta_v$ are $U$-conjugate. That is, it is solvable whether two fixed points with coordinates $u$ and $v$ are Nielsen equivalent. 
\end{proposition}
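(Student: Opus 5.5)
The plan is to reduce the question to the twisted conjugacy problem in $F_n$ via Theorem \ref{thm1}, and then invoke the solvability result of \cite{BMMV}. The input is a loop braid $\beta\in\LB_n$, given as a word in the generators $\sigma_i,\rho_i$ of Theorem \ref{bre}, together with two words $u,v$ in $x_1,\dots,x_n$.

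First, I compute the automorphism $\phi_\beta$ explicitly. Using the permutation-conjugacy embedding $\LB_n\subset\Aut(F_n)$ of Section \ref{pre}, each generator $\sigma_i^{\pm1}$ and $\rho_i$ has an explicit action on $x_1,\dots,x_n$. Composing these actions along the word for $\beta$, with the paper's left-conjugation convention, gives the images $\phi_\beta(x_1),\dots,\phi_\beta(x_n)$ as finite reduced words. This step is a finite, mechanical computation.

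Second, I apply Theorem \ref{thm1}. It shows that $\beta_u$ and $\beta_v$ are conjugate by an element of $U$ if and only if $u$ and $v$ are $\phi_\beta$-conjugate, that is, $v=\phi_\beta(w)uw^{-1}$ for some $w\in F_n$. This is exactly the $\phi_\beta$-twisted conjugacy relation in the finitely generated free group $F_n$, where the automorphism is given by the explicit images of the generators. The algorithm of \cite{BMMV} takes $\phi_\beta$ and the words $u,v$ as input and decides this relation, so $U$-conjugacy of $\beta_u$ and $\beta_v$ is decidable.

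Third, for the statement about fixed points, let $x$ and $y$ be fixed points with $R(x)=u$ and $R(y)=v$. By Lemma \ref{coordinate}, $l_x=\beta_u$ and $l_y=\beta_v$. Theorem \ref{thm2} then makes Nielsen equivalence of $x$ and $y$ equivalent to the twisted conjugacy decided in the previous step. Here the coordinates are taken as given input; Lemma \ref{Nielsen}(i) shows that the answer does not depend on the choice of connecting paths.

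The only point that needs care is matching conventions: the twisted conjugacy in \cite{BMMV} may be stated as $v=\phi(w)uw^{-1}$ or as $v=w^{-1}u\phi(w)$. These agree after replacing $w$ by $w^{-1}$, so no real obstacle arises, and the result is a direct corollary.
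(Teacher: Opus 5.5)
Your proposal is correct and matches the paper's argument: Theorem \ref{thm1} reduces $U$-conjugacy of $\beta_u$ and $\beta_v$ to $\phi_\beta$-twisted conjugacy of $u$ and $v$ in $F_n$, where $\phi_\beta$ is given explicitly on the generators. The solvability of the twisted conjugacy problem in free groups from \cite{BMMV} then finishes the proof. Your extra remarks on computing $\phi_\beta$ from a word in the $\sigma_i,\rho_i$, and on passing to fixed points via Lemma \ref{coordinate} and Theorem \ref{thm2}, spell out what the paper leaves implicit.
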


\subsection{Invariants of ribbon torus-links}
We know that loop braids close up to loop links, and due to Satoh's Tube map we know that loop links are sent to ribbon
torus-links in $S^4$ \cite{Satoh}. Moreover, we know that conjugate loop braids have equivalent closures \cite{Kamada}.
Since Nielsen equivalent fixed points have $U_{n+1}$-conjugate, in particular conjugate, loop
braids, then by Theorem \ref{thm2}, the ribbon torus-link obtained by closing $t(l_x)\in \LB_{n,1}$ is an invariant of the Nielsen class of $x$. Thus, every invariant of ribbon torus-links yields an invariant of Nielsen
classes. That is, if the closures of $t(l_x)$ and $t(l_y)$ are not isotopic then $x$
and $y$ are not Nielsen equivalent. This is of course weaker than Theorem~\ref{thm2} itself, which
sees $U_{n+1}$-conjugacy rather than conjugacy, but it makes the four-dimensional invariants
available.

\subsection{Question}
For surface homeomorphisms, Boyland \cite{Boyland} associated with each periodic orbit a braid type, and
introduced the braid forcing. That is, one braid type is said to force another if every
homeomorphism possessing an orbit of the first type must also have an orbit of the second type. The loop braid type would play the same role for homeomorphisms $f$ of $\BB^3$
leaving $C$ invariant, which raises the following question.

\begin{question}
Is there a notion of forcing on the loop braid types of periodic orbits of $f$, analogous to Boyland's
forcing for braid types of surface homeomorphisms?
\end{question}

\section*{Acknowledgement}

The author has received funding for this project from the European Union’s Horizon 2024 research and innovation programme under the Marie Skłodowska-Curie grant agreement No. $101205038$.

\bibliographystyle{plain}
\bibliography{bibliography}

\end{document}